\theoremstyle{plain}
\newtheorem*{theorem*}{Theorem}
\newtheorem{theorem}{Theorem}[section]
\newtheorem{lemma}[theorem]{Lemma}
\theoremstyle{definition}
\newtheorem{definition}[theorem]{Definition}
\newtheorem {proposition}[theorem] {Proposition}
\newcommand{\R}{\mathbb{ R}}
\newcommand{\C}{\mathbb{ C}}
\newcommand{\Z}{\mathbb{ Z}}
\DeclareMathOperator{\im}{im}
\DeclareMathOperator{\Span}{span}
\DeclareMathOperator{\Id}{Id}
\newcommand{\g}{\mathfrak{g}}
\newcommand{\h}{\mathfrak{h}}
\renewcommand{\t}{\mathfrak{t}}
\newcommand{\mf}{\mathfrak}
\newcommand{\bz}{{\bar{z}}}
\newcommand{\Ad}{\operatorname{Ad}}
\newcommand{\ad}{\operatorname{ad}}
\newcommand{\inv}{^{-1}}
\newcommand{\p}{\mathfrak{p}}
\newcommand{\Tor}{\C/\Lambda}
\newcommand{\D}{\nabla_\lambda}
\begin{document}
\title
{Toda Frames, Harmonic maps and extended Dynkin diagrams}
\author{Emma Carberry, Katharine Turner}
\date{\today}
\maketitle

\begin{abstract}
 We prove that all immersions of a genus one surface into $ G/T $ possessing a Toda frame can be constructed by integrating a pair of commuting vector fields on a finite dimensional Lie algebra. Here $ G $ is any simple real Lie group (not necessarily compact),
$ T $ is a Cartan subgroup and the $k$-symmetric space structure on $ G/T $ is induced from the Coxeter automorphism. We provide necessary and sufficient  conditions for the existence of a Toda frame for a harmonic map into  $G/T$
and describe those $G/T$ to which the theory applies in terms of involutions of  extended Dynkin diagrams.

\end{abstract}

\section{Introduction}


The last few decades have seen significant progress in the understanding and classification of harmonic maps from surfaces into  compact real Lie groups and symmetric spaces.
An important class of harmonic maps are those of  \emph{finite type}, which are obtained as the solutions to  a pair of  ordinary  differential equations on a finite dimensional loop algebra. This is a far simpler process than attempting to solve the Laplace-Beltrami equation directly, and so motivates us to determine circumstances under which harmonic maps are of finite type. Similarly, when the target manifold is a $ k $-symmetric space, $ k >2 $,
it is natural to restrict our attention to those harmonic maps  which are cyclic primitive and ask when these maps are of finite type. Many papers  (e.g. \cite {Hitchin:90, PS:89, Bobenko:91, FPPS:92, BFPP:93, BPW:95, Burstall:95})
 have addressed these questions when the target Lie group or ($ k $)-symmetric space is {\em compact}.
We remove the need for this compactness assumption
and in Theorem~\ref {thm:finite} show that all  maps from a genus one surface into a $ k $-symmetric space $ G/T $ possessing a Toda frame are of  finite type, where $ G $ is  any simple real Lie group preserved by a Coxeter automorphism and $ T $ is the corresponding Cartan subgroup. A natural generalisation of the usual 2-dimensional affine Toda field equations provides the integrability condition for the existence of a Toda frame, and so we make contact with classical integrable systems theory.
 To determine the spaces $ G/T $ and the harmonic maps into them to which this theory applies we  address the following two questions, each of independent interest:
\begin {enumerate}
\item {\em When does a map from a surface into $ G/T $ possess a Toda frame?} and
\item {\em When is $ G $ preserved by a Coxeter automorphism?}
\end {enumerate}
The first of these is answered in Theorem~\ref {theorem:Toda}, where it is proven that a map from a surface into $ G/T $ locally has a Toda frame precisely when it is cyclic primitive and a certain function is constant. Cyclic primitive maps are in particular harmonic and play an analogous role for $ k $-symmetric spaces as harmonic maps do for symmetric spaces.
This and our finite-type result are the natural extensions of results obtained in \cite {BPW:95} in the case when $ G $ is compact. The second question does not arise in the compact situation, since  a Coxeter automorphism for a complex simple Lie algebra $\g ^\C $ automatically preserves a compact real form $\g $. 
We characterise when a Coxeter automorphism preserves a real form of a complex simple Lie algebra, which is equivalent to the corresponding real Lie group $ G $ being preserved whenever $ G $ simply connected or adjoint. Given simple roots for  $\g^\C $ spanning a Cartan subalgebra $\t ^\C $, let $\sigma $ be
 the associated Coxeter automorphism and $\Theta $ a Cartan involution with respect to $\g $ that preserves $\t =\g\cap\t ^\C $.
Then  $\sigma $ preserves $\g $ if and only if $\Theta $ defines a permutation of the extended Dynkin diagram, so in particular whenever $\t $ is a maximally compact Cartan subalgebra (Proposition~\ref{prop:Coxeter}).
 In Theorem~\ref{thm:3.2}
we prove that all involutions of the extended Dynkin diagram for a simple complex Lie algebra $\g ^\C $ arise from a Cartan involution for some real form $\g $.

Harmonic maps from surfaces into Lie groups and symmetric spaces arise naturally in many geometric and physical problems. On the geometric side, strong motivation comes from the study of surfaces with particular curvature properties. For example, minimal surfaces are described by conformal harmonic maps and both constant mean curvature and Willmore surfaces are characterised by having harmonic Gauss maps into particular symmetric spaces. From the physics viewpoint, these harmonic maps are interesting because of their relationship with the appropriate Yang-Mills equations and  non-linear sigma-models. Indeed  the harmonic map equations on a Riemann surface are precisely the reduction of the Yang-Mills equations on $\R ^ {2, 2} $ obtained by considering solutions invariant under translation in the directions of negative signature. Classical solutions of sigma-models are given by harmonic maps into (non-compact) as pseudo-Riemannian manifolds.  In  \cite{CT:12} we study an explicit example, namely  harmonic tori in de Sitter spaces $ S ^ {m}_1 $. In particular we apply the theory of this paper to the superconformal such maps with globally defined harmonic sequence to see that they may all be obtained by integrating a pair of commuting vector fields on a finite-dimensional vector space. It follows that all Willmore tori in $ S ^ 3 $ without umbilic points may be obtained in this simple way.

%

The structure of this paper is as follows. In section~\ref {symmetric} we give the general theory for harmonic maps of surfaces into symmetric spaces and for primitive maps into $ k $-symmetric spaces when the relevant Lie group $ G $ is equipped with a bi-invariant pseudo-metric. 
The question of when a Coxeter automorphism preserves the real form of the complex simple Lie algebra is addressed in section ~\ref{dynkin} in terms of Cartan involutions and extended Dynkin diagrams.
Section~\ref {Toda} contains the relationship with the affine Toda field equations and the finite type result is proven in section~\ref {finite}.

%
%
%

It is a pleasure to thank Anthony Henderson for helpful conversations regarding the Lie-theoretic results of section~\ref {dynkin}.

\section{Finite type maps into symmetric spaces}\label{symmetric}


The fact that a harmonic map from a surface to a Lie group corresponds to a loop of flat connections \cite {Pohlmeyer:76,  Uhlenbeck:89} is the fundamental observation that enables one to apply integrable systems techniques to the study of these maps. The Cartan map $ G/H\rightarrow G $ from a symmetric space to the relevant Lie group is well-known to be a totally geodesic immersion when $ G $ is compact and equipped with a bi-invariant Riemannian metric. The composition of a harmonic map with a totally geodesic one is again harmonic, so this enables harmonic maps into symmetric spaces to be studied using the same tools as those into Lie groups, and in particular in terms of a loop of flat connections. We show in Theorem~\ref{thm2.1} 
that when $ G $ has merely a bi-invariant pseudo-metric that the Cartan map is again a totally geodesic immersion. In particular all reductive Lie groups possess a bi-invariant pseudo-metric. We can hence study harmonic maps into  $ G/H $ using integrable systems methods regardless of whether $ G $ is compact.

Let $ G $ be a semisimple Lie group. Recall that a homogeneous space $ G/H $ is a {\it $ k $-symmetric space} ($ k >1 $) if there is an automorphism $\tau: G\rightarrow G $ of order $ k $ such that
\[
(G ^\tau)_0\subset H\subset G ^\tau
\] where $ G ^\tau $ denotes the fixed point set of $\tau $, and $ (G ^\tau)_0 $ the identity component of $ G ^\tau $. When $ k = 2 $, we say that $ G/H $ is a {\it symmetric space}.
We have the induced action
\begin {align*}
\tau: G/H &\rightarrow G/H\\
 gH &\mapsto\tau (g) H.
\end {align*}
We write $\tau $ also for the induced automorphism of $\g $ and note the $\Z_k $-grading
\[
\g ^\C =\bigoplus_{j = 0} ^ {k -1}\g ^\tau_j,\; [\g ^\tau_j,\g ^\tau_l ]\subset\g ^\tau_{j + l},
\]
 where $\g ^\tau_j $ denotes the $ e ^ {j\frac {2\pi i} {k} } $-eigenspace of $\tau $.

We shall be interested in harmonic maps from a Riemann surface $\Sigma $ into a symmetric space $ G/H $. When $ G $ is compact, the Killing form on $\g $ induces a bi-invariant metric on $ G/H $ and the harmonic map equations for $ f:\Sigma\rightarrow G/H $ may either be calculated directly \cite{Wood:94}, using Noether's Theorem \cite {Rawnsley:84}, 
or by composing $ f $ with the Cartan map $ G/H\rightarrow G $, which is well-known in this case to be a totally geodesic immersion \cite {CE:75}. Recall here that the Cartan map of a symmetric space is given by
\begin {align*}
\iota:\;\; & G/H\rightarrow G\\
& gH\mapsto \tau (g ) g^{-1}.
\end {align*}
We suppose merely that $ G $ has a bi-invariant pseudo-metric. Then analogous computations hold; in particular we can reduce the problem to studying harmonic maps into the Lie group $ G $ due to the following result.
\begin {theorem}\label{thm2.1}
Let $ G $ be a semisimple  Lie group with bi-invariant pseudo-metric $\langle \cdot,\cdot \rangle $ and $ G/H $ a symmetric space with respect to the involution $\tau: G\rightarrow G $.
Then $\iota: gH\mapsto \tau (g)g ^ {- 1} $ is a totally geodesic immersion $ G/H\rightarrow G $ .
If $ H = G ^\tau $, then $\iota$ is additionally an embedding.
\end {theorem}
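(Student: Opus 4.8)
The plan is to verify in turn that $\iota$ is well defined, an immersion, totally geodesic, and (when $H=G^\tau$) injective, and then to promote injectivity to an embedding. Well-definedness is where the symmetric-space hypothesis enters: if $g'=gh$ with $h\in H$ then $\tau(g')g'\invers=\tau(g)\tau(h)h\invers g\invers$, and since $H\subset G^\tau$ we have $\tau(h)=h$, so $\iota(g'H)=\iota(gH)$. I would record the eigenspace decomposition $\g=\h\oplus\m$ into the $\pm1$-eigenspaces of $\tau$ (that is, $\h=\g^\tau_0$ and $\m=\g^\tau_1$ in the notation above), and note that because $\tau$ preserves $\langle\cdot,\cdot\rangle$ (automatic for the Killing form) these summands are orthogonal and the form remains non-degenerate on each; in particular its restriction to $\m\cong T_{eH}(G/H)$ endows $G/H$ with an invariant pseudo-metric.

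The computational heart is a single formula. For $X\in\m$, using $\tau X=-X$,
\[
\iota\!\left(g\exp(tX)H\right)=\tau(g)\exp(t\tau X)\exp(-tX)g\invers=\tau(g)\exp(-2tX)g\invers .
\]
Differentiating at $t=0$ gives $d\iota_{gH}(gX)=-2\,\tau(g)\,X\,g\invers$, which is nonzero for $X\neq0$; since such $gX$ exhaust $T_{gH}(G/H)$ this shows $\iota$ is an immersion. Everything up to here is purely algebraic and identical to the compact case. Setting $g=e$ in the same formula shows that the symmetric-space geodesic $t\mapsto\exp(tX)H$ is carried to the one-parameter subgroup $t\mapsto\exp(-2tX)$. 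To conclude that $\iota$ is totally geodesic I would invoke the equivariance $\iota(a\cdot gH)=\tau(a)\,\iota(gH)\,a\invers$: both the left $G$-action on $G/H$ and the twisted conjugation $p\mapsto\tau(a)pa\invers$ on $G$ act by isometries (the latter because $L_{\tau(a)}$ and $R_{a\invers}$ preserve the bi-invariant pseudo-metric), and every geodesic of $G/H$ has the form $t\mapsto a\exp(tX)H$. Hence $\iota$ sends geodesics to geodesics. Since the pull-back $\iota^*\langle\cdot,\cdot\rangle$ equals $4\langle\cdot,\cdot\rangle$ on $\m$, it is proportional to the invariant pseudo-metric and shares its geodesics, so the conclusion is independent of this choice.

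The two geodesic facts I am using are that one-parameter subgroups are the geodesics through $e$ of a bi-invariant metric, and that $\exp(tX)H$, $X\in\m$, are the geodesics through $eH$ of the invariant metric on the symmetric space. I expect the main obstacle to be confirming that these remain valid for an \emph{indefinite} metric: the usual proofs compute the Levi-Civita connection via the Koszul formula and reduce to $\nabla_XY=\tfrac12[X,Y]$ for left-invariant fields on $G$ and the analogous relation on $\m$, all of which follow from $\Ad$-invariance of $\langle\cdot,\cdot\rangle$ and the bracket relations $[\h,\m]\subset\m$, $[\m,\m]\subset\h$ rather than from positive-definiteness. The task is therefore to check that no step of the Riemannian argument secretly uses definiteness; granting this, the pseudo-metric case is formally the same, which is precisely what removes the compactness assumption.

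Finally, for $H=G^\tau$ I would prove injectivity directly: $\iota(g_1H)=\iota(g_2H)$ forces $\tau(g_2\invers g_1)=g_2\invers g_1$, so $g_2\invers g_1\in G^\tau=H$ and $g_1H=g_2H$. To promote this injective immersion to an embedding, I would identify $\iota(G/H)$ with the orbit of $e$ under the twisted conjugation $a\cdot p=\tau(a)pa\invers$, whose stabiliser at $e$ is exactly $G^\tau=H$, so that the orbit map is precisely $\iota$. It then remains to check that this orbit is locally closed, hence an embedded submanifold; I would do so by realising it inside the closed set $\{p\in G:\tau(p)=p\invers\}$, so that $\iota$ becomes a homeomorphism onto its image.
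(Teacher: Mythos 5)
Your proposal is correct, and its core coincides with the paper's argument: both proofs rest on the facts that geodesics of a bi-invariant pseudo-metric through $g$ are the curves $g e^{tX}$, that geodesics of $G/H$ are the projections $g e^{tX}H$ with $X\in\mathfrak m$, and that $\iota$ carries the latter into the former (your computation $\iota(g\exp(tX)H)=\tau(g)\exp(-2tX)g^{-1}$ is the same one the paper performs, up to its harmless convention flip between $\tau(g)g^{-1}$ and $g\tau(g)^{-1}$); the injectivity argument for $H=G^\tau$ is verbatim the paper's. The differences are at the edges. Where you defer the indefinite-signature verification of the two geodesic facts to ``check that the Koszul formula never uses definiteness,'' the paper discharges this concretely: it identifies the Levi-Civita connection as $\nabla=\tfrac12(\nabla^l+\nabla^r)$, the average of the connections trivialising $TG$ by left and right translation, from which $\nabla_{\gamma_X'}\gamma_X'=0$ is immediate, and it obtains the $G/H$ statement from horizontality of the lifts $g e^{tX}H$. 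Your plan is sound, but that explicit computation is the cleanest way to carry it out. Your use of the equivariance $\iota(a\cdot gH)=\tau(a)\iota(gH)a^{-1}$ to reduce everything to the base point, instead of computing at a general $g$ as the paper does, is a cosmetic difference, as is your immersion argument via exponential curves versus the paper's kernel computation with an arbitrary lift.

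On the embedding claim you go beyond the paper, which proves only injectivity and stops; you are right that ``embedding'' also demands homeomorphism onto the image, and the orbit picture for twisted conjugation, with stabiliser exactly $G^\tau$, is the right framework. But the last step of your sketch has a gap as stated: containment of the orbit in the closed set $M=\{p\in G:\tau(p)=p^{-1}\}$ does not by itself yield local closedness, since $M$ could a priori strictly contain the orbit's closure. What completes the argument is the classical Cartan-embedding step: show the orbit of $e$ is \emph{open} in $M$ (at each orbit point the tangent space of the orbit already fills the tangent space of $M$, which is a submanifold because it is the fixed-point set of the affine involution $p\mapsto\tau(p)^{-1}$), hence also closed in $M$ as the complement of the remaining open orbits, hence closed in $G$; the standard fact that an orbit map onto a locally closed orbit is a homeomorphism then finishes the proof.
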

\noindent Let us call a Lie group $ G $ {\em reductive} if its Lie algebra $\mathfrak g $ is reductive, that is has radical equal to its centre. Then $\mathfrak g $ may be written as the direct sum of a semisimple Lie algebra and an abelian one. On the semisimple Lie algebra the Cartan-Killing form is non-degenerate, whilst on the abelian algebra any bilinear form is invariant under the adjoint action of the group. Combining these we obtain the existence of a bi-invariant pseudometric on any reductive Lie group, and hence the above theorem in particular applies when $ G $ is reductive.
\begin {proof}{\em $\iota$ is an immersion:} Suppose $d\iota_{gH} (\gamma' (0)) = 0 $ for some smooth path $\gamma $ in $ G/H $ with $\gamma (0) = gH $. Take a lift $\tilde\gamma $ of $\gamma $ to $ G $ with $\tilde\gamma (0) = g $ and write $\pi: G\rightarrow G/H $ for the projection. Then
\[
0 = 
 \left.\dfrac{d}{dt}\right|_{t=0}\left (\tau\left(\tilde\gamma(t)\right)\left(\tilde\gamma(t)\right)^{-1}\right) = d\tau_g (\tilde\gamma' (0)) g^ {- 1} -\tau (g) g^ {- 1}\tilde\gamma' (0) g^ {- 1},
\]
so
\[
 d\tau_e (g^ {- 1}\tilde\gamma' (0)) =\tau (g^ {- 1}) d\tau_g (\tilde\gamma' (0)) = g^ {- 1}\tilde\gamma' (0)
\]
and $\gamma'(0)$ is zero in $T_{g H}(G/H)$ so $d\iota_{g H}$ is injective.

{\em $\iota$ is totally geodesic:}
 Let $\nabla ^l $ denote the connection on $ G $ obtained by trivialising $ TG $ by left translation, and similarly $\nabla ^ r $ that induced from trivialising by right translation. A computation shows that
$\nabla ^ r =\nabla ^ l +\mathrm{ad}_{g^ {- 1} dg} $ and hence
\[
\nabla =\frac 12 (\nabla ^ l +\nabla ^ r)
\]
is the Levi-Civita connection of the pseudo-metric $\langle \cdot ,\cdot \rangle $.

Denote by $\exp:\mathfrak g\rightarrow G $ the Lie-theoretic exponential map, and by $e $ 
the differential-geometric exponential map associated to the Levi-Civita connection $\nabla $.
Note that as in the definite case, for each $ X\in\mathfrak g $
the map
\begin {align*}
\gamma_X: \g&\rightarrow G\\
 t&\mapsto e ^ {tX}
\end {align*}
\noindent is a geodesic, i.e.
$\nabla_{\gamma'_X}\gamma'_X = 0 $, so $ \exp $ and $e $ agree on the domain of $e $. Since the pseudo-metric is bi-invariant, we conclude that the geodesics through  $ g\in G $ are locally of the form $\gamma (t) = g e ^ {tX} $.
Denote by $\mathfrak{m} $ the $(-1)$-eigenspace of $\tau:\g\rightarrow\g $, and note that $\g =\h\oplus \mathfrak{m} $, where $\h $ is
the Lie algebra of $ H $. The lift $\tilde\gamma (t) = g e ^ {tX} H $ is horizontal, in the sense that $\tilde\gamma' (t)\in g e ^ {tX}\mathfrak{m} $. Thus the geodesics in $ G/H$ through $ g H $ are locally of the form $\tilde\gamma (t) = g e ^ {tX} H $. Since
\[
\iota(g e ^ {tX} H) = g e ^ {tX}\tau (e ^ {- tX})\tau (g^ {- 1}) = g e ^ {2 tX}\tau (g^ {- 1}) = g\tau (g^ {- 1}) e ^ {t\tau (g) X\tau (g^ {- 1})}
\]
is again a geodesic, we conclude that $\iota$ is totally geodesic.

{\em If $ H = G ^\tau $, then $\iota$ is an embedding:} In this case if $\iota(g_1H) =\iota(g_2H) $, then $ g_1 ^ {- 1} g_2 =\tau (g_1 ^ {- 1} g_2) $, and so $ g_1 ^ {- 1} g_2\in H $, and thus $\iota$ is injective.
\end {proof}

Let $ F:U\rightarrow G $ be a smooth lift of $ f:U\rightarrow G/H $ on some simply connected $ U\subset\Sigma $, where we assume henceforth that $ G $ is semisimple and has a bi-invariant pseudo-metric (we will later restrict our attention to simple such $ G $.). By the above theorem, $ f $ is harmonic if and only if $\iota\circ f $ is. The Maurer-Cartan form on $ G $ is the unique left-invariant $\g$-valued 1-form which acts as the identity on $\g $. We denote it by $\omega $, and note that if $ G $ is a linear group, then $\omega= g ^ {- 1} dg $. We will use this notation throughout even in the non-linear case.
Write $ \tilde {f} = \iota\circ f $ and $\Phi = \tilde {f} ^*(\omega) = \tilde {f} ^ {- 1} d\tilde {f} $.  For any smooth $ \tilde {f} $, the form $\Phi $ satisfies the zero-curvature condition
\begin {equation}
d\Phi +\frac 12 [\Phi\wedge\Phi ] = 0,\label {eq:MC}
\end {equation}
known as the Maurer-Cartan equation. Recall that for vector fields $ X, Y $,
\[
[\Phi\wedge\Phi] (X, Y) = 2 [\Phi,\Phi] (X, Y) = [\Phi (X),\Phi (Y)].
\]
The condition that the map $ \tilde {f}:\Sigma\rightarrow G $ is harmonic can be written as
\begin {equation}
d*\Phi = 0.\label {eq:harmonic}
\end {equation}
Noting that $ \tilde {f} =\tau (F) F^ {- 1} $, we have
\begin {equation}
\label {eq:composesymmetric}
\Phi = F\left (\tau (F)^ {- 1} d (\tau (F)) - F^{-1}d F \right) F^ {- 1} = -2\mathrm{Ad}_F (\varphi_{\mathfrak{m}}),
\end {equation}
where $\varphi  =\varphi _{\mathfrak{h}} +\varphi _{\mathfrak{m}}$ is the decomposition of $\varphi : = F^ {- 1} d F $ into the eigenspaces of $\tau $. Then \eqref {eq:harmonic} becomes
\begin {equation}\label {eq:harmonic1st}
0 = d (\mathrm{Ad}_F (*\varphi _{\mathfrak{m}})) =\mathrm{Ad}_F (d*\varphi _{\mathfrak{m}} + [\varphi \wedge*\varphi_{\mathfrak{m}} ])
\end {equation}
or equivalently,
\begin {equation}
d*\varphi_{\mathfrak{m}} + [\varphi\wedge*\varphi_{\mathfrak{m}} ] = 0.\label{eq:harmonicsymmetric}
\end{equation}
One can also compute the harmonic map equations directly for $ f $. Writing $ [\mathfrak{m}] $ for the subbundle of $ G/H\times\mathfrak{g} $ whose fibre at $ g\cdot x $ is $\mathrm{Ad}_g (\mathfrak{m}) $, we have an isomorphism $ [\mathfrak{m}]\cong {T (G/H)}] $ given by
\begin {align*}
[\mathfrak{m} ]_y &\rightarrow T_y G/H\\
Y &\mapsto\left.\frac {d} {dt}\right|_{t = 0} e ^ {tY}\cdot y.
\end {align*}

The inverse of this isomorphism defines a $\g $-valued 1-form $\theta $ on the symmetric space $ G/H $, which we term its Maurer-Cartan form. Then \cite {Rawnsley:84} $ f $ is harmonic if and only if
\[
d*(f ^*\theta) = 0
\]
and using that
\[
f ^*\theta =\mathrm{Ad}_F (\varphi_{\mathfrak{m}})
\]
we recover  \eqref  {eq:harmonic1st}.
Write $\varphi'_\mathfrak{m} +\varphi_\mathfrak{m}'' $ for the decomposition of $\varphi_\mathfrak{m} $ into $ dz $ and $ d\bar z $ parts. Since $ [\mathfrak{m},\mathfrak{m} ]\subset\h $, a straightforward computation shows \eqref {eq:MC} and \eqref {eq:harmonicsymmetric} are equivalent to the requirement that for each $\lambda\in S ^ 1 $, the form
\begin {equation}\label {eq:form}
\varphi _\lambda =\lambda\varphi'_\mathfrak{m} +\varphi_\h +\lambda^ {- 1}\varphi''_\mathfrak{m}
\end {equation}
satisfies the Maurer-Cartan equation
\begin {equation}
d\varphi_\lambda +\frac 12 [\varphi_\lambda\wedge\varphi_\lambda ] = 0.\label {eq:flat}
\end {equation}
Some solutions to \eqref {eq:flat} can be obtained simply by solving a pair of commuting ordinary differential equations on a finite-dimensional loop algebra. These unusually simple solutions are said to be of  finite type.

 Let $ G/K $ be a $ k $-symmetric space for $ k >2 $ and $\tau $ the corresponding $ k $th order involution. As we shall now explain when mapping into a $ k $-symmetric space for $ k >2 $ it is natural to restrict our attention to a subclass of harmonic maps consisting of those which are primitive, a notion that we now define. Again we have the reductive splitting
\[
\g =\mathfrak{k}\oplus\p
\]
with
\[
\p ^\C =\bigoplus_{j = 1} ^ {k -1}\g_j ^\tau,\qquad \mathfrak {k} ^\C =\g_0 ^\tau.
\]
 Similarly to before we may define the Maurer-Cartan form $\theta $ of the $ k $-symmetric space $ G/K $ when $ k >2 $.  For any smooth lift $ F: U\rightarrow G $ of $ \psi: U\rightarrow G/K $, writing $\varphi = F ^*\omega $ we have
\[
\psi ^*\theta =\mathrm{Ad}_F\varphi_{\p}.
\]

We say that a smooth map $ \psi$ of a surface $\Sigma $ into $ G/K $  is {\em primitive} if the image of $ \psi ^*\theta'  $ is contained in $ [ \g_1 ] $. Equivalently, it is primitive precisely when 
$\varphi ' = F ^ {- 1}\partial F $ takes values in $\g_0 ^\tau\oplus\g _1^\tau $. Using that $ [\g_1^\tau,\g_{-1} ^\tau]\subset\g_0^\tau $, the Maurer-Cartan equation for $\varphi $ decomposes into $\g_1^\tau $, $\g_0^\tau $ and $\g_{-1} ^\tau $ components as
\begin {align}
d\varphi'_\p + [\varphi _{\mathfrak {k}}\wedge\varphi '_\p] & = 0\label {eq:MCg1}\\
d\varphi_{\mathfrak {k}} + \frac 12 [\varphi _{\mathfrak {k}}\wedge\varphi_{\mathfrak {k}}] + [\varphi '_\p\wedge\varphi''_\p] & = 0\nonumber\\
d\varphi''_\p + [\varphi _{\mathfrak {k}}\wedge\varphi ''_\p] & = 0.\nonumber
\end {align}
From these equations one easily verifies that primitive maps are in particular harmonic. Moreover \cite{BP:94} if $ G/H $ is a symmetric space with $ K\subset H $ and the corresponding reductive splitting preserved under $\tau $, then the projection of $ \psi:\Sigma\rightarrow G/K $ into $ G/H $ is harmonic. An analogous calculation to that above shows that on simply connected subsets $ U\subset\Sigma $, a primitive map $ \psi: U\rightarrow G/K $ is equivalent to a loop
\begin {equation}
\label {eq:primitiveflat}
\varphi_\lambda =\lambda \varphi '_\p +\varphi_{\mathfrak{k}} +\lambda^ {- 1}\varphi''_\p,\quad\lambda\in S ^ 1
\end {equation}
of $\g $-valued 1-forms each satisfying the Maurer-Cartan equation. We see then that both harmonic maps into symmetric spaces and primitive maps into $ k $-symmetric spaces are governed by the same equation \eqref{eq:flat} so we turn now to the question of constructing solutions to this equation.

Let $\Omega G $ be the loop group
$
\Omega G =\{\gamma: S ^ 1\rightarrow G \}
$ with corresponding loop algebra
$
\Omega\g: =\{\xi: S ^ 1\rightarrow\g\} $ , where the loops are assumed real analytic without further comment.
We use $\Omega\g ^\C $ to denote loops in the complexified Lie algebra $\g ^\C $.
For studying maps into $ k $-symmetric spaces it is helpful to consider the twisted loop group
\[
\Omega ^\tau G =\{\gamma: S ^ 1\rightarrow G:\gamma (e ^{\frac {2\pi i} {k}\lambda}) =\tau (\gamma (\lambda))\}
\]
 and corresponding twisted loop algebra $\Omega ^\tau\g $ along with its complexification $\Omega ^\tau\g ^\C $.
The (possibly doubly infinite) Laurent expansion
\[
\xi(\lambda) =\sum_{j} \xi_j \lambda  ^ j,\quad\xi_j\in\g ^\tau_j\subset\g ^\C,\quad\Phi_{- j} =\bar\Phi_j
\]
allows us to filtrate $\Omega ^\tau\g ^\C $ by finite-dimensional subspaces
\[
\Omega ^\tau_d =\{\xi\in\Omega\g\mid \xi_j = 0\text { whenever }\left|j\right| >d\}.
\]

Fix a Cartan subalgebra $\mathfrak t$ of $\g $ such that $\mathfrak t\subset\mathfrak k $
and recall that a non-zero $\alpha \in (\mathfrak{t}^\C)^*$ is a {\em root}  with corresponding {\em root space} $\mathcal{G}^{\alpha}\subset\g ^\C $ 
 if  $[X_1, X_2 ]=\alpha(X_1) X_2 $ for all $X_1\in \mathfrak{t}$ and $ X_2\in \mathcal{G}^{\alpha}$. 
  We denote the set of roots by $\Delta$ and employ the same notation for the root system formed by considering $\Delta $ as a subset of $ (\mathfrak {t} ^\C) ^*$.
Choose a set of {\em simple roots}, that is a subset $\{ \alpha_1, \ldots, \alpha_N \}$  of $\Delta$ such that every root $\alpha\in \Delta$ can be written uniquely as
$$\alpha=\sum_{j=1}^{N} m_j \alpha_j,$$ where the $m_j$ are either all positive integers or all negative integers. 
 The {\em height } of $\alpha$ is $h(\alpha) =\sum_{j = 1} ^ N m_j$ and the root(s) of maximal height are called {\em highest root(s)} whilst those of minimal height are termed {\em lowest root(s)}.

We similarly define the root spaces 
of $\mathfrak k ^\C $. Let $\mathfrak n $ be the nilpotent algebra consisting of the positive root spaces of $\mathfrak k ^\C $ with respect to a choice of simple roots and consider the resulting Iwasawa decomposition
\begin{equation}\label {eq:Iwasawa}
\mathfrak{k}^\C =\mathfrak{n}\oplus\mathfrak{t}^\C\oplus\bar{\mathfrak {n}}
\end {equation}
 of $\mathfrak {k} ^\C $.
Then for $\eta\in\mathfrak k ^\C $ and a local coordinate $ z $ on $\Sigma $, decomposing according to \eqref{eq:Iwasawa} we have
\[
(\eta dz)_{\mathfrak h} = r (\eta) dz +\overline {r (\eta)} d\bar z
\]
where $r:\mathfrak{k}^\C\rightarrow\mathfrak{k}^\C $ is defined by
\begin {equation}\label {eq:r}
r (\eta) =\eta_{\bar{\mathfrak{n}}} +\frac 12\eta_{\mathfrak k}.
\end {equation}

The key observation here is that for simply-connected coordinate neighbourhood $ U\subset\Sigma $, if $\xi: U\rightarrow\Omega ^\tau_d $ satisfies
\begin {equation}\label{eq:flows}
\dfrac {\partial\xi} {\partial z} = [\xi,\lambda\xi_d + r (\xi_{d -1})]
\end{equation}
then
\[
\varphi_\lambda = (\lambda\xi_d + r (\xi_{d -1})) d z + (\lambda^ {- 1}\xi_{- d} +\overline {r (\xi_{d -1})}) d\bar z
\]
satisfies the Maurer-Cartan equation \eqref{eq:flat} (c.f. \cite{BP:94}, Theorem 2.5). The equation
\[
\frac 12 ( X (\xi) - iY (\xi)) = (\lambda\xi_d + r (\xi_{d -1}))
\]
defines vector fields $ X, Y $ on $\Omega_d $. A straightforward computation shows that these vector fields commute and so finding solutions to \eqref {eq:flows} is merely a matter of solving a pair of commuting ordinary differential equations. This yields a rather special class of solutions to the Maurer-Cartan equations \eqref {eq:flat} and hence of harmonic maps to symmetric spaces and primitive maps to $ k $-symmetric spaces, $ k >2 $.
The flows of $ X, Y $ are easily seen to evolve on spheres in $\Omega_d $. When $ G $ is compact, so are these spheres and hence $ X, Y $ are complete and for any initial condition the differential equation \eqref {eq:flows} has a unique solution on $ U $. However when $ G $ is non-compact the completeness of $ X, Y $ is not guaranteed.

\begin {definition}
A harmonic map $ f:\Sigma\rightarrow G/H $ to a symmetric space or a primitive map $ \psi:\Sigma\rightarrow G/K $ to a $ k $-symmetric space, $ k >2 $ is said to be of {\em finite type} if it has a lift $ F:\Sigma\rightarrow G $ for which there exists
a smooth map $\xi: \R \to\Omega_d ^\tau\g$ satisfying
\begin {equation}\label {eq:lax}
d\xi = [\xi,\varphi _\lambda ]
\end {equation}
and
\begin {equation}\label {eq:adapted}
\varphi_\lambda = (\lambda\xi_d + r (\xi_{d -1})) d z + (\lambda^ {- 1}\xi_{- d} +\overline {r (\xi_{d -1})}) d\bar z.
\end {equation}
Here $\varphi_\lambda $ and $ r $ are defined in  \eqref  {eq:primitiveflat}
 and \eqref {eq:r} for the primitive case and in \eqref{eq:form} and the obvious analogue to  \eqref {eq:r} for the harmonic case.
\end {definition}
We introduce some terminology for later use. A {\em formal Killing field} for $ f $ or $ \psi $ is a smooth map $\xi:\Sigma\rightarrow\Omega ^\tau\mathfrak g $ satisfying the Lax equation \eqref{eq:lax}. When $\xi $ takes values in some $\Omega_d$ it is termed a {\em polynomial Killing field} of degree $ d $  and when it additionally satisfies \eqref{eq:adapted} it is an {\em adapted polynomial Killing field}.

When the automorphism $\tau: \g ^\C\rightarrow \g ^\C $ is of the form $\tau =\mathrm{Ad}_{\exp M}$ for some
$ M\in\mathfrak t ^\C $ where $\mathfrak t $ is a Cartan subalgebra of $\mathfrak g $,
  then we can express the eigenspaces $\mathfrak{g} ^\tau_j $ of $\tau $ in terms of root spaces. 

Given our chosen set of simple roots $\alpha _j $, denote by $ \eta_j$ the corresponding dual basis of $\mathfrak{t} ^\C$.
For any root  $\alpha=m_1\alpha_1 + \ldots m_N \alpha_N$, smooth map $s_j:\Sigma \to \C$ and root vector $R_\alpha\in\mathcal{G} ^\alpha $, a straightforward computation shows that
\begin {equation}
\Ad_{ \exp(s_1 \eta_1+\ldots s_N \eta_N)} R_\alpha = \exp(m_1 s_1+\ldots m_Ns_N)R_\alpha.\label {eq:roots}
\end {equation}
Note that $ \exp(m_1 s_1+\ldots m_Ns_N)$ is a scalar function.
%
Given $\tau = \Ad_{ \exp (\frac{2\pi i}{k}(\sum s_j \eta_j))} $ we have
$$\mathfrak{g}^\tau_l = \Span \{R_\alpha| \alpha=\sum_{j = 1} ^ N m_j\alpha_j, \sum_{j = 1} ^ N s_jm_j=l\bmod(k)\}.$$
In particular if we let $k-1$ denote the maximal height of a root of $\g ^\C $ and 
suppose
\begin {equation}\label {eq:Coxeterdefinition}\sigma:=\Ad_{ \exp ( \frac {2\pi i } {k}\sum_{j = 1} ^ N \eta_j)}\,,
\end {equation}
 then $\sigma$ is of order $k$  and from \eqref {eq:roots} it acts on the root spaces by
\begin {equation}\label {eq:Coxeter}
\sigma(R_\alpha) = \exp\left(\frac{2\pi i h(\alpha)}{k}\right)R_\alpha.
\end {equation}
We recognise the  inner automorphism $\sigma$ as the Coxeter automorphism associated to the identity transformation of the simple roots  \cite {BD:81}. It plays an important role here because when it preserves the real  Lie group $ G $,
 it allows us to view $G/T$ as a $k$-symmetric space for which $\g ^\sigma_1 $ 
is the sum of the simple and lowest root spaces. Here $ T $ is a Cartan subgroup with Lie algebra $\mathfrak t $.  Furthermore since $ K = T $ in this case, the map $ r $ described in \eqref {eq:r} is simply multiplication by $\tfrac {1} {2} $ and so the adapted polynomial Killing field condition \eqref {eq:adapted} simplifies. Taking this $ N $-symmetric space structure on $ G/T $, we say that a smooth map $ \psi:\Sigma\rightarrow G/T $ is  \emph{cyclic primitive} if it is primitive and satisfies the  condition that the image of $ \psi ^*\theta' $ contains a cyclic element. Writing $\alpha_0 $ for the lowest root, an element in $\left (\bigoplus_{j = 0} ^ N \mathcal G ^{\alpha_j}\right) $ is  \emph{cyclic} if its projection to each of the root spaces $  \mathcal G ^{\alpha_0},  \mathcal G ^{\alpha_1},\ldots ,  \mathcal G ^{\alpha _N} $ is non-zero. We henceforth assume that $ G $ is simple in order to guarantee the uniqueness of the lowest root (that is, we assume that $ G $ is connected and $\g $ is simple).

\section {Extended Dynkin diagrams and Cartan involutions}\label{dynkin}
We now ascertain the $ k $-symmetric spaces to which our theory will apply. That is, we give conditions under which a choice of real form $\g $ of a simple complex Lie algebra $\g ^\C $, Cartan subalgebra $\mathfrak t ^\C $ and simple roots $\alpha_j $ 
yield a Coxeter automorphism $\sigma $ which preserves the real Lie algebra $\g $. When $ G ^\C $ is a simply connected or adjoint simple Lie group with Lie algebra $\g ^\C $, this ensures that the Coxeter automorphism preserves the real group $ G $.

Let $\;\bar{}\;$
denote the complex conjugation of $\g ^\C $ corresponding to the real form $\g $. Define the conjugate of a root $\alpha $ by
\[
\bar\alpha (X) =\overline {\alpha (\bar X)}.
\]
Then from \eqref {eq:Coxeter} we see that the condition for the Coxeter automorphism $\sigma $ to preserve $\g $ is that for all roots $\alpha $, the height $ h (\alpha $) satisfies
\[
h (\bar\alpha) = - h (\alpha)\bmod k,
\]
or equivalently that for
$ j = 1,\ldots, N $ we have
\begin {equation*}\label {eq:simplereality}
\bar\alpha_j\in\{-\alpha_0,\ldots, -\alpha_N\}.
\end {equation*}
We will now use a Cartan involution to express this reality condition in terms of the extended Dynkin diagram for $\alpha_0,\ldots,\alpha_N $. A Cartan involution is an involution $\Theta $ of $\g $ such that
\[
 \langle X, Y \rangle _{\Theta} = - \langle X,\Theta (Y) \rangle
\]
is positive definite, where $ \langle \cdot ,\cdot  \rangle $ denotes the Killing form. Using complex-linearity, $\Theta $ extends to an involution of $\g ^\C $.
We may \cite[Prop.\! 6.59]{Knapp:02} 
choose a Cartan involution $\Theta $ which preserves the Cartan subalgebra $\mathfrak t $.
\begin {proposition}\label {prop:Coxeter}
Let $\g $ be a real simple Lie algebra, $\t $ a Cartan subalgebra and $\Theta $ be a Cartan involution preserving $\mathfrak t $. Choose simple roots $\alpha_1,\ldots,\alpha_N $ for the root system $\Delta (\g ^\C,\t ^\C) $ and let $\sigma $ be the corresponding Coxeter automorphism of $\g ^\C $ defined in \eqref {eq:Coxeterdefinition}. Then the following are equivalent:
\begin {enumerate}
\item  $\sigma $ preserves the real form $\g $,
\item $\sigma $ commutes with $\Theta $,
\item $\Theta $ defines a permutation of the extended Dynkin diagram for $\g ^\C $ consisting of the usual Dynkin diagram augmented with the lowest root $\alpha_0 $.
\end {enumerate}
\end {proposition}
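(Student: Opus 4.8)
The plan is to transport everything to the induced actions on the root system $\Delta$ and to exploit a single bridge relation linking the conjugation to the Cartan involution on roots. Write $\kappa$ for the complex conjugation of $\g^\C$ fixing $\g$ (the map $\bar{\ }$ of the text). First I would record the elementary fact that an automorphism of $\g^\C$ preserves a real form exactly when it commutes with the corresponding conjugation; applied to $\kappa$ this rewrites (1) as the condition $\sigma\kappa=\kappa\sigma$. Since $\Theta$ preserves $\g$ by hypothesis it commutes with $\kappa$, so $\theta_c:=\kappa\Theta=\Theta\kappa$ is an involutive conjugate-linear automorphism, and I would identify it as the conjugation with respect to the compact real form $\mathfrak u=\mathfrak k\oplus i\mathfrak p$ determined by $\Theta$ (it is the identity on $\mathfrak k$ and $-1$ on $\mathfrak p$).

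The linchpin is the action of $\theta_c$ on $\Delta$. Because $\t$ is $\Theta$-stable, $\t_{\mathfrak u}=(\t\cap\mathfrak k)\oplus i(\t\cap\mathfrak p)$ is a compact Cartan subalgebra with $(\t_{\mathfrak u})^\C=\t^\C$, and every root is purely imaginary on $\t_{\mathfrak u}$; this forces $\alpha(\theta_c H)=-\overline{\alpha(H)}$, so $\theta_c$ acts as $\alpha\mapsto-\alpha$ on $\Delta$. Composing the actions in $\theta_c=\kappa\circ\Theta$ on root spaces then yields the key identity $\bar\alpha=-(\alpha\circ\Theta)$ for every root. Writing $\theta(\alpha):=\alpha\circ\Theta$ for the induced action of $\Theta$ on $\Delta$ (an involution preserving the Killing form), this reads $\bar\alpha=-\theta(\alpha)$, and it is what I would establish most carefully, since it converts every statement about $\kappa$ into one about $\theta$.

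For (1)$\Leftrightarrow$(2) I would observe that $\sigma=\Ad_{\exp M}$ with $M=\tfrac{2\pi i}{k}\sum_j\eta_j$ always commutes with $\theta_c$: from $\alpha_i(\theta_c\eta_j)=-\overline{\delta_{ij}}$ and the fact that the $\alpha_i$ form a basis of $(\t^\C)^*$ one gets $\theta_c\eta_j=-\eta_j$, whence conjugate-linearity gives $\theta_c(M)=M$ and so $\theta_c\sigma\theta_c^{-1}=\Ad_{\exp\theta_c(M)}=\sigma$. Since $\kappa=\theta_c\Theta$, commuting with $\kappa$ is then equivalent to commuting with $\Theta$, establishing (1)$\Leftrightarrow$(2).

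Finally, for (1)$\Leftrightarrow$(3) I would use the reformulation already noted, $\bar\alpha_j\in\{-\alpha_0,\dots,-\alpha_N\}$ for all $j$, which via $\bar\alpha=-\theta(\alpha)$ becomes $\theta(\alpha_j)\in\{\alpha_0,\dots,\alpha_N\}$ for $j=1,\dots,N$; the implication (3)$\Rightarrow$(1) is then immediate. For (1)$\Rightarrow$(3) the only remaining point is the affine node: I must show $\theta$ sends $\alpha_0$ into the node set, so that $\theta$, being an injective involution of $\Delta$ preserving the Killing form, restricts to an automorphism of the extended diagram. If $\theta(\alpha_{j_0})=\alpha_0$ for some $j_0\ge 1$, then involutivity gives $\theta(\alpha_0)=\alpha_{j_0}$; otherwise $\theta$ permutes the ordinary simple roots, hence preserves heights and fixes the unique highest root $\tilde\alpha$, so $\theta(\alpha_0)=\theta(-\tilde\alpha)=\alpha_0$. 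I expect this affine-node case analysis — rather than the Lie-theoretic bridge, which is standard — to be the main point requiring care, as it is precisely where the distinguished role of $\alpha_0$ and the simplicity of $\g$ (uniqueness of the highest root) are used.
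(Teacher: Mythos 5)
Your proof is correct, and while it pivots on the same bridge identity as the paper -- $\Theta(\alpha)=-\bar\alpha$ for all roots, which the paper quotes from Knapp (Cor.\ 6.49) and you re-derive via the compact real form $\mathfrak u=\mathfrak k\oplus i\mathfrak p$ -- your middle step is genuinely different. For (1)$\Leftrightarrow$(2) the paper performs a computation on root vectors (showing $\sigma\circ\Theta(R_\alpha)=\Theta\circ\sigma(R_\alpha)$ is equivalent to $\sigma(\bar R_{-\alpha})=\overline{\sigma(R_{-\alpha})}$), whereas you argue structurally: $\theta_c(\eta_j)=-\eta_j$ together with conjugate-linearity forces $\theta_c(M)=M$, so $\sigma=\Ad_{\exp M}$ commutes with the compact conjugation $\theta_c$ \emph{unconditionally}, and the factorisation $\kappa=\theta_c\Theta$ then makes ``commutes with $\kappa$'' and ``commutes with $\Theta$'' literally the same condition. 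This is cleaner and explains conceptually why only the ``$\Theta$-part'' of the conjugation can obstruct. For the diagram condition the routes also differ: the paper proves (2)$\Leftrightarrow$(3) via $h(\Theta(\alpha))\equiv h(\alpha)\bmod k$ quantified over \emph{all} roots, so the node $\alpha_0$ is handled automatically; you instead prove (1)$\Leftrightarrow$(3) from the simple-root reformulation $\bar\alpha_j\in\{-\alpha_0,\dots,-\alpha_N\}$, which constrains only $\alpha_1,\dots,\alpha_N$, and therefore you need the affine-node case analysis (either some $\alpha_{j_0}$ maps to $\alpha_0$ and involutivity closes the loop, or $\theta$ preserves the simple system and fixes the unique highest root). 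That case analysis is valid, but note it could be bypassed: since root conjugation is additive, $\bar\alpha_0=-\sum_j m_j\bar\alpha_j$ has height $\equiv k-1\bmod k$, so $\theta(\alpha_0)=-\bar\alpha_0$ is automatically a node once the simple-root condition holds -- this is in effect what the paper's height argument does.
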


\begin {proof} Write $\mathfrak t =\mathfrak l\oplus \mathfrak p $, where $\mathfrak l $, $\mathfrak p $ are respectively the $ (+ 1 ) $-eigenspace and $ (- 1) $-eigenspace of the action of $\Theta $ on $\mathfrak t $.
Then  \cite[Cor. 6.49] {Knapp:02}  all roots $\alpha $ are real on $\mathfrak p $ and imaginary on $\mathfrak l $, and defining the action of $\Theta $ on roots by $\Theta (\alpha) (X) =\alpha (\Theta (X)) $ we have that
\[
\Theta (\alpha) = -\bar\alpha\quad\text { for all roots }\alpha.
\]
If $ R_\alpha $ is a root vector for $\alpha $, then $\bar R_\alpha $ is a root vector for $\bar\alpha $ and $\Theta (R_\alpha) $ is a root vector for $\Theta (\alpha) $. We  assume that our root vectors are chosen so that
\[
R_{\bar\alpha} =\bar R_\alpha
\]
and write $ R_{\Theta (\alpha)} = c_\alpha\Theta (R_\alpha) $. Then using \eqref {eq:Coxeter}, a straightforward computation shows that $\sigma\circ\Theta (R_\alpha) =\Theta\circ\sigma (R_\alpha) $ if and only if $\sigma (\bar R_{-\alpha}) =\overline {\sigma (R_{-\alpha})} $, 
proving the equivalence of conditions (1) and (2) above. 

The Cartan involution $\Theta $ commutes with $\sigma $ if and only if for all roots $\alpha $, the height function $ h $ satisfies
\[
h (\Theta (\alpha))\equiv h (\alpha)\bmod k,
\]
or equivalently when $\Theta $ defines a permutation of
$\alpha_0,\alpha_1,\ldots,\alpha_N $. All automorphisms of a Lie algebra preserve the Killing form and hence a Cartan involution $\Theta $ as above defines a permutation of the extended Dynkin diagram
 and we see the equivalence of conditions (2) and (3).
\end {proof}
 We next show that every involution of the extended Dynkin diagram for $\Delta (\g ^\C,\t ^\C) $ does indeed arise from a Cartan involution for some real form $\g $ with $\Theta $-stable Cartan subalgebra $\t =\g\cap\t ^\C $.
A $\Theta $-stable Cartan subalgebra $\t $ of $\g $ is maximally compact if and only if $\Theta $ preserves a set of simple roots for the root system $\Delta (\g,\t)$ \cite [p 387] {Knapp:02}. Hence when $\t $ is maximally compact, a Coxeter automorphism $\sigma $ must stabilise the real form $\g $. (In particular, all Cartan subalgebras of a compact real form $\g $ are maximally compact.) The more interesting case then is when the Cartan subalgebra $\t $ is not maximally compact, which corresponds to the involution of the extended Dynkin diagram acting nontrivially on the lowest root $\alpha_0 $.

\begin{theorem}\label{thm:3.2} Every involution of the extended Dynkin diagram for a simple complex Lie algebra $\g ^\C $ is induced by a Cartan involution of a real form of $\g ^\C $.

 More precisely, let $\g^\C $ be a simple complex Lie algebra with Cartan subalgebra $\t ^\C $ and choose simple roots $\alpha_1,\ldots,\alpha_N $ for the root system $\Delta (\g ^\C,\t ^\C) $. Given an involution $\pi $ of the extended Dynkin diagram for $\Delta $,
there exists  a real form $\g $ of $\g ^\C $ and a Cartan involution $\Theta $ of $\g $ preserving $\t =\g\cap\t ^\C $ such that $\Theta $ induces $\pi $ and $\t $ is a real form of $\t ^\C $. The Coxeter automorphism $\sigma $ determined by $\alpha_1,\ldots,\alpha_N $ preserves the real form $\g $.
\end {theorem}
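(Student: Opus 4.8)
The plan is to reverse the logic of Proposition~\ref{prop:Coxeter}: rather than starting from a real form and reading off a diagram involution, I would start from the abstract involution $\pi$, manufacture a Cartan involution $\Theta$ realising it, and then invoke the implication (3)$\Rightarrow$(1) of that proposition for the final sentence. Throughout, the target relation to keep in mind is $\Theta(\alpha)=-\bar\alpha$, which ties the permutation of the extended diagram to the conjugation defining $\g$.

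First I would promote $\pi$ from a combinatorial symmetry to a genuine linear map. Since $\alpha_1,\ldots,\alpha_N$ is a basis of $(\t^\C)^*$, define $\Theta$ on $(\t^\C)^*$ by $\Theta(\alpha_i)=\alpha_{\pi(i)}$ for $i=1,\ldots,N$, extend linearly, and transport it to $\t^\C$ via the Killing form. The first point to check is that $\Theta$ then acts on the lowest root by $\pi$ as well, i.e. $\Theta(\alpha_0)=\alpha_{\pi(0)}$. Writing the affine relation $\sum_{j=0}^N a_j\alpha_j=0$ with $a_0=1$ (so that $\alpha_0=-\sum_{i=1}^N a_i\alpha_i$ is the negative of the highest root) and using that an automorphism of the extended diagram preserves the marks $a_j$ together with $a_{\pi(0)}=a_0=1$, a short computation gives $\Theta(\alpha_0)=a_{\pi(0)}\alpha_{\pi(0)}=\alpha_{\pi(0)}$, as wanted. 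Because $\pi$ preserves the extended Cartan matrix, $\Theta$ is an isometry of the Killing form, and since $\pi^2=\mathrm{id}$ it is an involution. Finally $\Theta$ preserves $\Delta$: it normalises the Weyl group $W$, conjugating the reflection $s_{\alpha_i}$ to $s_{\alpha_{\pi(i)}}$, which is still a reflection in a root even when $\pi(i)=0$ since $s_{\alpha_0}=s_\theta\in W$, and it carries at least one root into $\Delta$, hence carries all of $\Delta=W\cdot\{\alpha_i\}$ into $\Delta$.

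Next I would lift $\Theta$ from the root system to $\g^\C$ and arrange the reality. Fix a Chevalley basis $\{H_{\alpha_i},E_\alpha\}$ and the associated compact real form $\g_u$ with conjugation $\tau_u$, normalised so that $\tau_u(E_\alpha)=-E_{-\alpha}$ and $\tau_u|_{\t_\R}=-\mathrm{id}$. Extend $\Theta$ to a complex-linear automorphism of $\g^\C$ by $\Theta(E_\alpha)=\epsilon_\alpha E_{\Theta\alpha}$, choosing $\epsilon_\alpha\in\{\pm1\}$ so that $\Theta^2=\mathrm{id}$ and $\Theta$ commutes with $\tau_u$; the required relations are $\epsilon_\alpha\epsilon_{\Theta\alpha}=1$ and $\epsilon_{-\alpha}=\epsilon_\alpha$, which can be met by fixing the signs on a fundamental domain for $\langle\Theta\rangle$ acting on $\Delta$. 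Then $\Theta$ restricts to an involution of $\g_u=\mathfrak{k}_u\oplus\mathfrak{p}_u$, and the standard construction produces a real form $\g=\mathfrak{k}_u\oplus i\mathfrak{p}_u$ of $\g^\C$ for which $\Theta|_\g$ is a Cartan involution, with conjugation $\tau_\g=\Theta\circ\tau_u$. By construction $\Theta$ preserves $\t^\C$, so $\t:=\g\cap\t^\C$ is a $\Theta$-stable real form of $\t^\C$ of real dimension $N$, and $\Theta$ induces exactly $\pi$ on the extended diagram. The last sentence is then immediate: $\Theta$ is a Cartan involution preserving $\t$ that defines a permutation of the extended Dynkin diagram, so condition (3) of Proposition~\ref{prop:Coxeter} holds and hence so does (1), namely $\sigma$ preserves $\g$.

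The main obstacle I anticipate is not the existence of the abstract isometry $\Theta$ — the affine-relation computation above makes that clean — but the simultaneous normalisation of the signs $\epsilon_\alpha$. One must choose them consistently on all $\Theta$-orbits of roots so that $\Theta$ is genuinely an involution \emph{and} genuinely commutes with $\tau_u$, these being the two conditions that together guarantee, via the compact-form construction, that $\Theta$ is a bona fide Cartan involution rather than merely an involutive automorphism. Verifying that such sign choices always exist, and in particular do not clash on roots fixed or swapped by $\Theta$, is where the real bookkeeping lies.
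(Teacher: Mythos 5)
Your first step is sound: promoting $\pi$ to an isometric involution of $(\t^\C)^*$ that preserves $\Delta$ and satisfies $\Theta(\alpha_0)=\alpha_{\pi(0)}$ (the computation with the marks is exactly right). Your endgame is also attractive, and genuinely different from the paper's: by arranging $\Theta$ to commute with the compact conjugation $\tau_u$ attached to the Chevalley basis, you would get the real form $\g=\mathfrak{k}_u\oplus i\mathfrak{p}_u$, the Cartan involution, and the full rank of $\t=\g\cap\t^\C$ in one stroke, whereas the paper has to invoke Kac's conjugation theorem, the existence result from Knapp, and Matsuki's theorem on $\Theta$-stable Cartan subalgebras to achieve the same three things.

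The genuine gap is in the middle, and it is exactly where the content of the theorem lies. You claim the signs $\epsilon_\alpha\in\{\pm1\}$ ``can be met by fixing the signs on a fundamental domain for $\langle\Theta\rangle$ acting on $\Delta$.'' But the $\epsilon_\alpha$ are not free parameters: an automorphism of $\g^\C$ is determined by its values on the Chevalley generators $E_{\pm\alpha_1},\ldots,E_{\pm\alpha_N}$, and the scalar on every other root vector is then forced by the structure constants. In particular, since $E_{\alpha_0}$ is an iterated bracket of the $E_{-\alpha_j}$, the scalar $\epsilon_{\alpha_0}$ defined by $\Theta(E_{\alpha_0})=\epsilon_{\alpha_0}E_{\alpha_{\pi(0)}}$ is a fixed product of the $\epsilon_{\alpha_j}^{-m_j}$ and structure-constant ratios, not something you may prescribe. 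You can freely arrange $\epsilon_{\alpha_i}\epsilon_{\alpha_{\pi(i)}}=1$ for the simple pairs with $\pi(i)\neq 0$, but the residual involutivity condition $\epsilon_{\alpha_0}\epsilon_{\alpha_{\pi(0)}}=1$ is then out of your hands: what one can actually show is only that $\epsilon_{\alpha_0}\epsilon_{\alpha_{\pi(0)}}=\pm1$. Repairing a possible minus sign is where the work is. When $\pi$ fixes some simple root $\alpha_j$ with odd mark $m_j$, flipping the sign of $\epsilon_{\alpha_j}$ fixes it; when no such node exists, one must exhibit, diagram by diagram (rotations and reflections of odd $A_N$, then $B_N$, $C_N$, $D_N$, $E_7$), auxiliary positive roots $\gamma$ (or pairs $\gamma,\delta$) with special bracket properties forcing the product to equal $+1$. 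That case analysis over the extended Dynkin diagrams constitutes the bulk of the paper's proof, and your proposal assumes it away; as written, the ``fundamental domain'' prescription does not even define an automorphism, let alone an involution.
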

\begin {proof}
Let $\pi $ be an involution of the extended Dynkin diagram. Denote also by $\pi $ the corresponding involution of the set $\{0, 1,\ldots, N\} $ and the induced   involution of $ (\t ^\C) ^* $ which preserves the root system $\Delta$ and satisfies $\pi (\alpha_j) =\alpha_{\pi (j)} $. 

Let $\{H_\alpha, R_\alpha\mid\alpha\in\Delta\}$ be a Chevalley basis. That is, writing $\alpha ^\# $ for the dual of the root $\alpha $ with respect to the Killing form $\kappa$ we set $ H_\alpha = (2/ \kappa(\alpha ^\#,\alpha ^\#)){\alpha ^\#} $ and we choose the root vectors $ R_\alpha $ so that
\[
[R_\alpha, R_{-\alpha}] = H_\alpha.
\]
and such that the {\em structure constants} $ c_{\alpha,\beta} $ defined by $ [R_\alpha, R_\beta] = c_{\alpha,\beta} R_{\alpha +\beta} $ satisfy 
$ c_{-\alpha, -\beta} = - c_{\alpha,\beta} $.
For any $ b_{\alpha_j}\in\C $ for $ j = 1,\ldots, N $, we obtain  an automorphism $\Theta $ of $\g ^\C $ compatible with $\pi $ by requiring that $\Theta (R_{\alpha_j}) = b_{\alpha_j} R_{\pi (\alpha_j)}$ for $ j = 1, \ldots, N $ and that $\{\pi (H_\alpha),\Theta (R_\alpha)\mid\alpha\in\Delta\} $ is a Chevalley basis. 
Our first task is to verify that for an appropriate choice of $ b_{\alpha_j} $, the resulting $\Theta $ is an involution. 

Given $\pi $ and $ b_{\alpha_1}, \ldots , b_{\alpha_N} $, for any root $\alpha $ we  define $ b_{\alpha}\in\C $ by the equation $\Theta (R_{\alpha}) = b_{\alpha}R_{\pi (\alpha)} $. 
The automorphism $\Theta $ will be an involution precisely when $ b_{\alpha_j}b_{\alpha_{\pi (j)}} = 1 $ for $ j = 1, \ldots, N $. For the $j$ with $\pi (j)\neq 0$, we can clearly guarantee this by taking $ b_{\alpha_{\pi (j)}} = b_{\alpha_j}^ {- 1} $.
We will show that $ b_{\alpha_1}, \ldots , b_{\alpha_N} $ can be chosen so that additionally $ b_{\alpha_0}b_{\alpha_{\pi (0)}} = 1 $.


We may express $R_{\alpha_0} $ as 
$C[R_{-\beta_1}, [R_{-\beta_2}, \ldots, [R_{-\beta_{K-1}}, R_{-\beta_K}]\ldots]]$ for some non-zero constant $C$ and simple roots $\beta_i$  such that $\sum_{i = 1} ^ K \beta_i = -\alpha_0$.
Now writing $\alpha_0=-\sum_{j = 1} ^ N m_j\alpha_j$ we have
\begin{equation}\label{eq:involbj}
b_{\alpha_0} R_{\alpha_{\pi (0)} =\Theta (R_{\alpha_0}}) = C\prod_{j=1}^N b_{-\alpha_j}^{m_j}[R_{-\pi(\beta_1)}, [R_{-\pi(\beta_2)}, \ldots, [R_{ \pi(\beta_{K-1})}, R_{-\pi(\beta_K)}]\ldots]]
\end{equation}
and $\Theta^2(R_{\alpha_0}) =\prod_{j=1}^N (b_{-\alpha_j}b_{-\alpha_{\pi (j)}})^{m_j} R_{\alpha_0}$, implying 
\[
b_{\alpha_0} b_{\alpha_{\pi (0)}} = \prod_{j=1}^N (b_{-\alpha_j}b_{-\alpha_{\pi (j)}})^{m_j}.
\]
Using that $\{\pi (H_{\alpha}),\Theta (R_{\alpha})\mid\alpha\in\Delta\} $ is again a Chevalley basis and that an automorphism of the extended Dynkin diagram must preserve the Killing form gives
\[
b_{\alpha_j}b_{-\alpha_j} =\frac {\kappa (\pi (\alpha_j)) , \pi (\alpha_j)) )} {\kappa (\alpha_j ,\alpha_j )}= 1.
\]
Hence 
\[
 b_{\alpha_0}b_{\alpha_{\pi (0)}} =\prod_{j = 1} ^ N (b_{\alpha_j}b_{\alpha_{\pi (j)}}) ^ {- m_j} = (b_{\alpha_{\pi (0)}} b_{\alpha_0}) ^ {- 1},
\]
where the last equality uses the assumption $ b_{\alpha_j}b_{\alpha_{\pi (j)}} = 1 $ for $\pi (j)\neq 0 $.
We therefore automatically have $ b_{\alpha_0}b_{\alpha_{\pi (0)}} =\pm 1 $. 
Considering \eqref{eq:involbj} shows that if there exists $ j $ such that $\pi (j) = j $ and $ m_j $ is odd then by switching the sign of $ b_{\alpha_j} $ if necessary we may ensure that $ b_{\alpha_0}b_{\alpha_{\pi (0)}} = 1 $.

It remains to give a method of proof for when there is no  $\alpha_j$ with $m_j$ odd that is fixed by $\pi $. If $\pi (0) = 0 $ then there is nothing to prove so we assume henceforth that $\pi (0)\neq 0 $. Suppose $\gamma$ is a positive root such that
\begin {enumerate} [(a)] 
\item the expression $\gamma =\sum_{j = 1} ^ N n_j\alpha_j $ as a sum of simple roots has $ n_{ \pi(0)} = 0 $,
\item $\pi(\gamma) + \alpha_{\pi(0)}$ is also a root, and 
\item $\gamma + \alpha_0 =- \pi(\gamma) - {\alpha_{\pi (0)}} $.
\end {enumerate}
From (c) we have that
\[
[[R_\gamma,R_{\alpha_0}],[R_{\pi(\gamma)}, R_{\pi(0)}]] =  c _{\gamma,\alpha_0}c_{\pi (\gamma),\pi (\alpha_0)} H_{\gamma + \alpha_0}.
\]
Applying $\Theta$  gives 
\[
[[b_\gamma R_{\pi(\gamma)},b_{\alpha_0}R_{\pi(0)}],[b_{\pi(\gamma)}R_\gamma, b_{\alpha_{\pi (0)}}R_{\alpha_0}]] = - c _{\gamma,\alpha_0}c_{\pi (\gamma),\pi (\alpha_0)} H_{\gamma + \alpha_0}
\]
and so
\begin{align}\label{eq:bgamma}
b_\gamma b_{\pi(\gamma)}b_{\alpha_0}b_{\alpha_{\pi (0)}} = 1.
\end{align}
We may write $R_\gamma$ as $C'[R_{\beta'_1},[R_{\beta'_2} \ldots  [R_{\beta'_{K'-1}}, R_{\beta'_{K'}}]]\ldots  ]$ with $C'$ a non-zero constant and $\beta'_i\neq\alpha_{\pi(0)}$ simple roots satisfying $\sum_{i = 1} ^ {K'}\beta'_i =\gamma$. Then 
\[
b_\gamma b_{\pi (\gamma)} R_\gamma =\Theta^2(R_\gamma) =\left (\prod_{i = 1} ^ {K'} b_{\beta'_i}b_{\beta'_{\pi(i)}}\right) R_\gamma. 
\]
However for simple roots $\alpha_j $ with $\pi (j)\neq 0 $ we chose $b_{\alpha_j}$ so that $ b_{\alpha_j} b_{\alpha_{\pi (j)}} = 1 $ and hence 
 $b_\gamma b_{\pi(\gamma)}=1$. Substituting this into \eqref{eq:bgamma} gives that $b_{\alpha_0}b_{\alpha_{\pi (0)}} = 1$, as required. 

A similar argument applies if there are positive roots $\gamma, \delta$ such that
\begin {enumerate} [(i)]
\item  the expressions of $\gamma,\delta $ as sums of simple roots do not contain $\alpha_{\pi(0)}$,
\item $\pi(\gamma) + \alpha_{\pi(0)}$ and $\delta + \pi(\delta)$ are also roots, and
\item $\delta +\pi(\delta) + \gamma + \pi(\gamma) = - \alpha_0 - \alpha_{\pi(0)} $. 
\end {enumerate}
Here we know there is some non-zero constant $C'' $ such that 
$$[[R_\gamma,R_{\alpha_0}],[R_{\pi(\gamma)}, R_{\pi(0)}]] =  C'' [R_{-\delta}, R_{-\pi(\delta)}]$$ and as above applying $\Theta$ gives
\[
b_\gamma b_{\pi(\gamma)}b_{\alpha_0}b_{\alpha_{\pi (0)}} = b_{-\delta} b_{-\pi(\delta)}.
\]
By (i) 
we know $b_\gamma b_{\pi(\gamma)} = 1$ and  $b_{-\delta} b_{-\pi(\delta)}=1$ so conclude that
$b_{\alpha_{\pi (0)}}b_{\alpha_0}=1$.

To show that every involution of the extended Dynkin diagram extends to an involution of the Lie algebra we now consider the involutions of each of the diagrams and, for those that do not fix some $\alpha_j$ with odd $m_j$, identify a suitable root $\gamma$ or pair of roots $\gamma, \delta$. 

\setcounter{figure}{0}
\begin{figure}[h]\label{figure:extended}
\begin{tikzpicture}[scale=0.8]
\draw(0,0) node {$E_8$};
\filldraw(1,0) circle (3pt);
\draw(2,0) circle (3pt);
\draw(3,0) circle (3pt);
\draw(4,0) circle (3pt);
\draw(5,0) circle (3pt);
\draw(6,0) circle (3pt);
\draw(7,0) circle (3pt);
\draw(8,0) circle (3pt);
\draw(6,1) circle (3pt);
\draw(1.10,0)--(1.90,0);
\draw(2.10,0)--(2.90,0);
\draw(3.10,0)--(3.90,0);
\draw(4.10,0)--(4.90,0);
\draw(5.10,0)--(5.90,0);
\draw(6.10,0)--(6.90,0);
\draw(7.10,0)--(7.90,0);
\draw(6,0.10)--(6,0.90);
\draw(1,-0.6) node {\footnotesize{$\alpha_0$}};
\draw(2,-0.6) node {\footnotesize{$\alpha_8$}};
\draw(3,-0.6) node {\footnotesize{$\alpha_7$}};
\draw(4,-0.6) node {\footnotesize{$\alpha_6$}};
\draw(5,-0.6) node {\footnotesize{$\alpha_5$}};
\draw(6,-0.6) node {\footnotesize{$\alpha_4$}};
\draw(7,-0.6) node {\footnotesize{$\alpha_3$}};
\draw(8,-0.6) node {\footnotesize{$\alpha_1$}};
\draw(5.5,1) node {\footnotesize{$\alpha_2$}};

\draw(0,2) node {$D_N$};
\draw(1,2) circle (3pt);
\draw(2,2) circle (3pt);
\draw(4,2) circle (3pt);
\draw(5,2) circle (3pt);
\filldraw(2,3) circle (3pt);
\draw(4,3) circle (3pt);
\draw(1.10,2)--(1.90,2);
\draw(2.10,2)--(2.60,2);
\draw(3.40,2)--(3.90,2);
\draw(4.10,2)--(4.90,2);
\draw(3,2) node{$\ldots$};
\draw(2,2.10)--(2,2.90);
\draw(4,2.10)--(4,2.90);
\draw(1,1.6) node {\footnotesize{$\alpha_1$}};
\draw(2,1.6) node {\footnotesize{$\alpha_2$}};
\draw(1.5,3) node {\footnotesize{$\alpha_0$}};
\draw(4,1.6) node {\footnotesize{$\alpha_{N-2}$}};
\draw(3.3,3) node {\footnotesize{$\alpha_{N-1}$}};
\draw(5,1.6) node {\footnotesize{$\alpha_N$}};

\draw(0,4) node {$C_N$};
\filldraw(1,4) circle (3pt);
\draw(2,4) circle (3pt);
\draw(3,4) circle (3pt);
\draw(5,4) circle (3pt);
\draw(6,4) circle (3pt);
\draw[double](1.10,4)--(1.90,4);
\draw(1.80,3.90)--(1.90,4);
\draw(1.80,4.10)--(1.90,4);
\draw(2.10,4)--(2.90,4);
\draw(3.10,4)--(3.60,4);
\draw(4.40,4)--(4.90,4);
\draw(4,4) node {$\ldots$};
\draw[double](5.10,4)--(5.90,4);
\draw(5.20,4.10)--(5.10,4);
\draw(5.20,3.90)--(5.10,4);
\draw(1,3.6) node {\footnotesize{$\alpha_0$}};
\draw(2,3.6) node {\footnotesize{$\alpha_1$}};
\draw(3,3.6) node {\footnotesize{$\alpha_2$}};
\draw(5,3.6) node {\footnotesize{$\alpha_{N-1}$}};
\draw(6,3.6) node {\footnotesize{$\alpha_N$}};

\draw(0,6) node {$B_N$};
\draw(1,6) circle (3pt);
\draw(2,6) circle (3pt);
\draw(4,6) circle (3pt);
\draw(5,6) circle (3pt);
\filldraw(2,7) circle (3pt);
\draw(1.10,6)--(1.90,6);
\draw(2.10,6)--(2.60,6);
\draw(3.40,6)--(3.9,6);
\draw[double](4.1,6)--(4.9,6);
\draw(4.8,6.1)--(4.9,6);
\draw(4.8,5.9)--(4.9,6);
\draw(2,6.10)--(2,6.9);
\draw(3,6)node {$\ldots$};
\draw(1,5.6) node {\footnotesize{$\alpha_1$}};
\draw(2,5.6) node {\footnotesize{$\alpha_2$}};
\draw(4,5.6) node {\footnotesize{$\alpha_{N-1}$}};
\draw(5,5.6) node {\footnotesize{$\alpha_N$}};
\draw(1.5,7) node {\footnotesize{$\alpha_0$}};

\draw(2.658,8.060)arc(-110:-359:1cm);
\draw(4,9)arc(0:-70:1cm);
\draw(3,8)node{$\ldots$};
\draw(0,9)node{$A_N$};
\draw(1.8,9.8)node{\footnotesize{$\alpha_1$}};
\draw(4.3,9.8)node{\footnotesize{$\alpha_N$}};
\draw(4.8,9)node{\footnotesize{$\alpha_{N-1}$}};
\draw(1.6,9)node{\footnotesize{$\alpha_2$}};
\draw(3,10.4)node{\footnotesize{$\alpha_0$}};
\filldraw(3,10)circle(3pt);
\filldraw[color=white](3.707,9.707)circle(3pt);
\filldraw[color=white](4,9)circle(3pt);
\filldraw[color=white](2.293,9.707)circle(3pt);
\filldraw[color=white](2,9)circle(3pt);

\draw(3.707,9.707)circle(3pt);
\draw(4,9)circle(3pt);
\draw(2.293,9.707)circle(3pt);
\draw(2,9)circle(3pt);

\draw(9,8) node {$E_7$};
\draw(10,8) circle (3pt);
\draw(11,8) circle (3pt);
\draw(12,8) circle (3pt);
\draw(13,8) circle (3pt);
\draw(14,8) circle (3pt);
\draw(15,8) circle (3pt);
\filldraw(16,8) circle (3pt);
\draw(10.1,8)--(10.9,8);
\draw(11.1,8)--(11.9,8);
\draw(12.1,8)--(12.9,8);
\draw(13.1,8)--(13.9,8);
\draw(14.1,8)--(14.9,8);
\draw(15.1,8)--(15.9,8);
\draw(13,8.1)--(13,8.9);
\draw(13,9) circle (3pt);
\draw(10,7.6) node {\footnotesize{$\alpha_7$}};
\draw(11,7.6) node {\footnotesize{$\alpha_6$}};
\draw(12,7.6) node {\footnotesize{$\alpha_5$}};
\draw(13,7.6) node {\footnotesize{$\alpha_4$}};
\draw(14,7.6) node {\footnotesize{$\alpha_3$}};
\draw(15,7.6) node {\footnotesize{$\alpha_1$}};
\draw(16,7.6) node {\footnotesize{$\alpha_0$}};
\draw(12.5,9) node {\footnotesize{$\alpha_2$}};

\draw(9,5) node {$E_6$};
\draw(10,5) circle (3pt);
\draw(11,5) circle (3pt);
\draw(12,5) circle (3pt);
\draw(13,5) circle (3pt);
\draw(14,5) circle (3pt);
\draw(12,6) circle (3pt);
\filldraw(12,7) circle (3pt);
\draw(10.1,5)--(10.9,5);
\draw(11.1,5)--(11.9,5);
\draw(12.1,5)--(12.9,5);
\draw(13.1,5)--(13.9,5);
\draw(12,5.1)--(12,5.9);
\draw(12,6.1)--(12,6.9);
\draw(10,4.6) node {\footnotesize{$\alpha_6$}};
\draw(11,4.6) node {\footnotesize{$\alpha_5$}};
\draw(12,4.6) node {\footnotesize{$\alpha_4$}};
\draw(13,4.6) node {\footnotesize{$\alpha_3$}};
\draw(14,4.6) node {\footnotesize{$\alpha_1$}};
\draw(11.5,6) node {\footnotesize{$\alpha_2$}};
\draw(11.5,7) node {\footnotesize{$\alpha_0$}};

\draw(9,3) node {$F_4$};
\filldraw(10,3) circle (3pt);
\draw(11,3) circle (3pt);
\draw(12,3) circle (3pt);
\draw(13,3) circle (3pt);
\draw(14,3) circle (3pt);
\draw(10.1,3)--(10.9,3);
\draw(11.1,3)--(11.9,3);
\draw[double](12.1,3)--(12.9,3);
\draw(12.8,2.9)--(12.9,3);
\draw(12.8,3.1)--(12.9,3);
\draw(13.1,3)--(13.9,3);
\draw(10,2.6) node {\footnotesize{$\alpha_0$}};
\draw(11,2.6) node {\footnotesize{$\alpha_1$}};
\draw(12,2.6) node {\footnotesize{$\alpha_2$}};
\draw(13,2.6) node {\footnotesize{$\alpha_3$}};
\draw(14,2.6) node {\footnotesize{$\alpha_4$}};

\draw(9,1) node {$G_2$};
\filldraw(10,1) circle (3pt);
\draw(11,1) circle (3pt);
\draw(12,1) circle (3pt);
\draw(10.1,1)--(10.9,1);
\draw(11.1,1)--(11.9,1);
\draw(11.1,1.06)--(11.85,1.06);
\draw(11.1,0.94)--(11.85,0.94);
\draw(11.8,0.9)--(11.9,1);
\draw(11.8,1.1)--(11.9,1);
\draw(10,0.6) node {\footnotesize{$\alpha_0$}};
\draw(11,0.6) node {\footnotesize{$\alpha_1$}};
\draw(12,0.6) node {\footnotesize{$\alpha_2$}};

\end{tikzpicture}
\caption{Extended Dynkin diagrams, with the lowest root $\alpha_0 $ coloured.}
\end{figure}
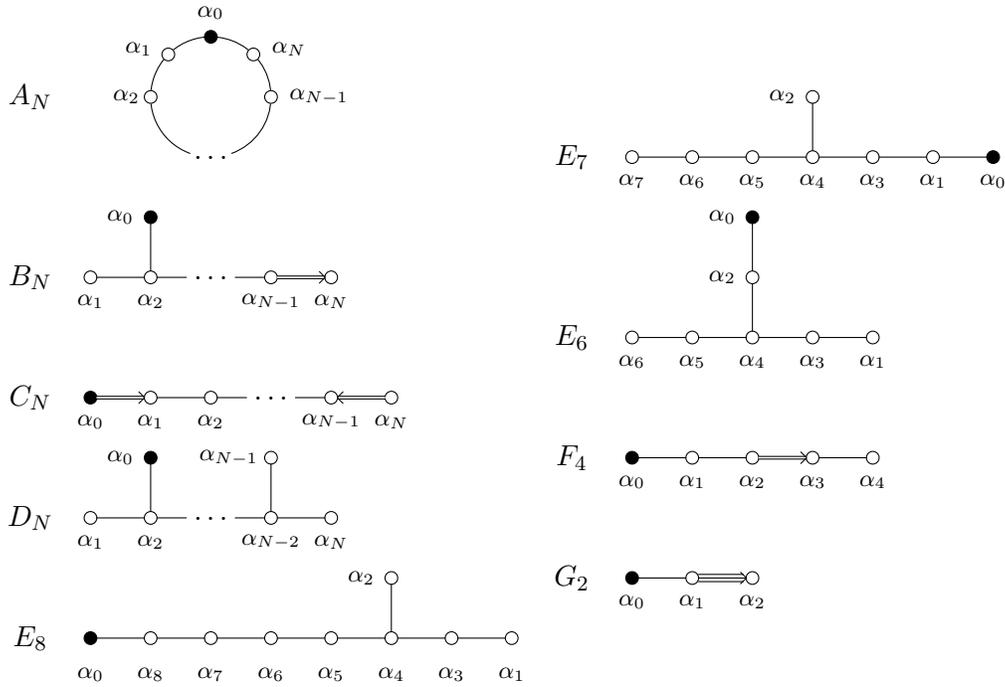

For a root system of type $ A_N $, the simple root coefficients $ m_j = 1 $ for all $ j = 1, \ldots , N $. Thus any diagram involution fixing some node is induced by an involution of the Lie algebra.  By inspection of the extended Dynkin diagram shown in Figure~\ref {figure:extended},
we see that when $ N $ is even, 
 every involution of the extended Dynkin diagram fixes some $\alpha_j $.
When $ N $ is odd we need to consider the rotation  $\pi (j) = j + \frac {1} {2} (N + 1)\bmod(N +1) $ and reflections.

For the involution  $\pi (j) = j + \frac {1} {2} (N + 1)\bmod(N +1) $, the root $\gamma=\alpha_1 + \alpha_2 +\ldots + \alpha_{\frac12(N-1)}$ satisfies conditions (a), (b), (c) above.

Consider now an involution $\pi $ coming from a reflection. Since we have automatically covered the cases when there is a fixed root we can assume that there are an even number of roots between $\alpha_0$ and $\pi(\alpha_0)$ going in each direction around the circle. Indeed the axis of reflection is between the nodes $(\pi(0)-1)/2$ and $(\pi(0)+1)/2$ and between $(N+\pi(0))/2$ and $(N+\pi(0))/2 + 1$. The roots
$$\gamma = \alpha_1 + \alpha_2 + \ldots + \alpha_{(\pi(0)-1)/2}\quad \text{and} \quad \delta = \alpha_{\pi(0)+1} + \alpha_{\pi(0)+ 2} \ldots +\alpha_{(\pi(0)+N)/2}$$
satisfy conditions (i), (ii), (iii) above.

There is only one involution of the root system of type $B_N$,  which sends $\alpha_0$ to $\alpha_1$ and fixes everything else. We can choose $\gamma = \alpha_2 + \ldots + \alpha_N$. 

For root systems of type $C_N$ there is again only one involution; $\pi(\alpha_i) = \alpha_{N-i}$. Here choose $\gamma= \alpha_1+\ldots +\alpha_{N-1}$.

For $ D_N $, $ m_1 = m_{N -1} = m_N = 1 $, and so we need only consider involutions which do not fix any of these vertices, of which there are three. These are involutions with $\pi(0) = 1, N-1$ or $N$. If $\pi(0)=1$ then let $\gamma = \alpha_2 + \ldots +\alpha_{N-1}$, and if $\pi(0)=N-1$ or $N$, take $\gamma =\alpha_1+\alpha_2 + \ldots + \alpha_{N-2 }$.

For the root system $ E_6 $, all involutions of the diagram fix the vertex $\alpha_4 $ and $ m_4 = 3 $ is odd.

The unique involution of the extended Dynkin diagram for $ E_7 $ satisfies $\pi (\alpha_0) =\alpha_7 $. A list of all positive roots of $ E_7 $ are tabulated for example in  \cite [p 1524-1530] {Vavilov:01}. 
Let $\gamma=\alpha_1 + \alpha_2 + 2\alpha_3 + 2\alpha_4 + \alpha_5 + \alpha_6$, so $\pi(\gamma)=\alpha_1 + \alpha_2 + \alpha_3 + 2\alpha_4 + 2\alpha_5 +\alpha_6$ and $\pi(\gamma)+\alpha_{\pi(0)}=\alpha_1 + \alpha_2 + \alpha_3 + 2\alpha_4 + 2\alpha_5 +\alpha_6+\alpha_7$ is also a root. Furthermore $\gamma + \alpha_{\pi(0)} + \pi(\gamma) = 2\alpha_1 +2\alpha_2+ 3\alpha_3 + 4\alpha_4 + 3\alpha_5 +2\alpha_6 +\alpha_7$ which is the highest root.

The extended Dynkin diagrams of type $ E_8, F_4, G_2 $ do not possess any involutions.

 We have then shown that given any involution $\pi $ of an extended Dynkin diagram for $ (\g ^\C,\t ^\C )$, there exists an involution $\Theta $ of $\g ^\C $ preserving $\t ^\C $ and inducing $\pi $. It remains to show that there is a real form $\g$ of $\g ^\C $ for which $\Theta $ is a Cartan involution and such that $\g\cap\t ^\C $ has full rank. For any choice of simple roots we may consider the corresponding {\em Borel subalgebra} $\mathfrak b ^\C = \mathfrak t ^\C \oplus \bigoplus_{\alpha\in\Delta^ +} \mathcal G ^\alpha $ and it is easy to see that $\Theta $ preserves the set of simple roots if and only if it preserves the corresponding Cartan and Borel subalgebras. Now by \cite [Theorem 8.6] {Kac:90} 
there exists an automorphism $\Psi $ of $\g ^\C $ such that $\Psi\Theta\Psi^ {- 1} $ acts on the corresponding simple and lowest root vectors $R_{\alpha_j} $ in the Chevalley basis simply by scaling them by $\pm 1 $, and hence preserves the Cartan and Borel subalgebras $\t ^\C $ and $\mathfrak b ^\C $. Then $\Theta $ preserves the Cartan subalgebra  $\Psi^ {- 1} (\t ^\C) $ and the Borel subalgebra $\Psi^ {- 1} (\mathfrak b ^\C) $ and hence the set of simple roots $\Psi^ {- 1}\{\alpha_1, \ldots ,\alpha_N\} $. 
Then  there exists a real form $\g' $ of $\g ^\C $ with respect to which $\Theta $ is a Cartan involution and  such that $\g'\Psi^{-1}(\t ^\C) $ is a Cartan subalgebra of $\g' $ \cite[proof of Theorem 6.88]{Knapp:02}.

 Let $\mathfrak{l} ^\C $ denote the $ (+1)$-eigenspace of $\Theta $ and $ L ^\C $ a complex Lie group with Lie algebra $\mathfrak{l} ^\C $. In  \cite [Theorem 1] {Matsuki:79} (c.f. \cite [Proposition 2.1] {Vogan:83}) it was shown that for a given real form $\g' $ and $\Theta $-stable Cartan subalgebra $\t ^\C $ of a simple complex Lie algebra $\g ^\C $, there exists a $\Theta $-stable Cartan subalgebra $\t' $ of $\g' $ and $ l\in L ^\C $ such that $\t ^\C =\Ad_l (\t') ^\C$. Hence $\g =\Ad_l\g' $ is a real form of $\g ^\C $ for which $\t =\g\cap\t ^\C $ is a $\Theta $-stable real form of $\t ^\C $ and $\Theta $ is a Cartan involution of $\g $.

By Proposition ~\ref {prop:Coxeter} the Coxeter automorphism corresponding to the choice of simple roots $\alpha_1,\ldots,\alpha_N $ preserves the real form $\g $ and  in particular the Cartan subalgebra $\mathfrak t $.\end {proof}

\section{Toda frame}\label{Toda}

We now explore the relationship between cyclic primitive maps and the affine Toda field equations. Henceforth $ G $ shall denote a simple real Lie group, $ T $ a Cartan subgroup and $\alpha_1, \ldots ,\alpha_N $ simple roots such that the resulting Coxeter automorphism $\sigma $ preserves the real group $ G $. This Coxeter automorphism then gives $ G/T $ the structure of a $ k $-symmetric space, where $ k -1 $ is the maximum height of a root of $\g ^\C $. We shall consider cyclic primitive maps $\psi $ from the complex plane into $ G/T $ and will see that 
cyclic primitive maps $\psi:\C\rightarrow G/T $
 arise from and give rise to solutions of the
two-dimensional affine Toda field equations for $\g $. Our results also apply to maps from a simply-connected coordinate neighbourhood of any Riemann surface.

The
famous Toda equations arose originally as a model for particle interactions
within a one-dimensional crystal, with the affine model corresponding to the
particles being arranged in a circle. They have been the subject of extensive
study, both as a completely integrable Hamiltonian system and in the context
of Toda field theories.
The standard form of the affine Toda field equation for $\g $ on the complex plane is
\begin{equation}\label{eq:Toda}
2\Omega_{z\bz}  = \sum_{j = 0}^ N m_j e^{2 \alpha_j (\Omega)}\alpha_j ^\sharp
\end{equation}
Here $\Omega : \C \rightarrow i \mathfrak{t}$ is a smooth map, the lowest root  $\alpha_0 $ is given by
\[
\alpha_0 = -
\sum_{j = 1}^N m_j \alpha_j,
\]
we set $m_0 = 1$ and $ R_{\alpha_j} $ are root vectors such that $ \alpha_j ^\sharp $ is the dual of $\alpha_j $ with respect to the Killing form. Using  \eqref {eq:simplereality},  since the Coxeter automorphism preserves the real form $\g $ there exists a permutation $ \pi $ of the roots $\alpha_0,\alpha_1,\ldots,\alpha_N $
such that
\begin {equation}\label {eq:permutation}
\overline { \alpha_j} = -\alpha_{\pi (j)}.
\end {equation}
We shall consider the generalisation of the affine Toda field 
equations obtained by allowing $m_0$, $m_1, \ldots, m_N $ to be any positive real
numbers such that
 $ m_{\pi (j)} =\overline {m_j} $ and $ R_{\alpha_j} $ to be any root vectors satisfying $\overline {R_{\alpha_j}} = R_{-\alpha_{\pi (j)}} $.

Given a cyclic element $W = \sum_{j = 0}^N r_j R_{\alpha_j}$ of
$\mathfrak{g}^{\sigma}_1$, we say that a lift $F : \C
\to G$ of $\psi : \C \rightarrow G / T$ is a \emph{Toda frame} with
respect to $W$ if there exists a smooth map $\Omega : \C \rightarrow i
\mathfrak{t}$ such that
\begin{equation}
  \label{eq:Todaframe} F^{- 1} F_z = \Omega_z + \Ad_{\exp \Omega} W.
\end{equation}

We call $\Omega$ an \emph{affine Toda field} with respect to $W$. The
motivation for this nomenclature is

\begin{lemma}
Fix a cyclic element $W = \sum_{j = 0}^N r_j R_{\alpha_j}$ of
$\mathfrak{g}^{\sigma}_1$ such that  $ m_{\pi (j)} =\overline {m_j} $ and $\overline {R_{\alpha_j}} = R_{-\alpha_{\pi (j)}} $.

The affine Toda field equation  \eqref {eq:Toda}  is the integrability condition for the existence
  of a Toda frame with respect to $ W $ where we take $m_j = r_j \overline{r_j}$ for $j
  = 0, \ldots, N$.
\end{lemma}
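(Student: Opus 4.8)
The plan is to recognise this as a flatness (zero-curvature) statement on the simply connected domain $\C$. First I would set $A := F^{-1}F_z = \Omega_z + \Ad_{\exp\Omega}W$ and form the $\g^\C$-valued $1$-form $\varphi = A\,dz + B\,d\bar z$. Since a Toda frame $F$ is required to take values in the real group $G$, its Maurer--Cartan form is $\g$-valued, which (writing $z = x+iy$ and comparing $dz,d\bar z$ parts) forces $B = \bar A$, the conjugate being taken with respect to the complex conjugation of $\g^\C$ determined by the real form $\g$. By the fundamental theorem on Maurer--Cartan forms, on the simply connected domain $\C$ such an $F$ exists precisely when $\varphi$ satisfies the Maurer--Cartan equation \eqref{eq:MC}; this is exactly the integrability condition we must translate into \eqref{eq:Toda}, and any solution $F$ is then automatically a Toda frame with respect to $W$.

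The next step is to compute $\bar A$ explicitly. Because $\Omega$ takes values in $i\mathfrak t$ we have $\bar\Omega = -\Omega$, and since conjugation interchanges $\partial_z$ and $\partial_{\bar z}$ we get $\overline{\Omega_z} = \partial_{\bar z}\bar\Omega = -\Omega_{\bar z}$; together with $\overline{\Ad_{\exp\Omega}W} = \Ad_{\exp(-\Omega)}\bar W$ this yields $\bar A = -\Omega_{\bar z} + \bar P$, where I abbreviate $P := \Ad_{\exp\Omega}W$ and $\bar P := \Ad_{\exp(-\Omega)}\bar W$. The crucial simplification is that $\Omega$ is $\mathfrak t^\C$-valued, so $\Ad_{\exp\Omega}$ acts on each root space $\mathcal G^{\alpha_j}$ by the scalar $e^{\alpha_j(\Omega)}$ via \eqref{eq:roots}; consequently $P = \sum_j r_j e^{\alpha_j(\Omega)}R_{\alpha_j}$, and since $[\Omega,\Omega_{\bar z}]=0$ one obtains the identities $P_{\bar z} = [\Omega_{\bar z}, P]$ and $(\bar P)_z = -[\Omega_z, \bar P]$.

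I would then substitute into the Maurer--Cartan equation, whose $dz\wedge d\bar z$ coefficient for $\varphi = A\,dz + \bar A\,d\bar z$ reads $\big((\bar A)_z - A_{\bar z}\big) + [A,\bar A] = 0$. Expanding $A = \Omega_z + P$ and $\bar A = -\Omega_{\bar z} + \bar P$, the two $\Omega$-second-derivative terms combine to $-2\Omega_{z\bar z}$, the term $[\Omega_z,\Omega_{\bar z}]$ vanishes since both lie in $\mathfrak t^\C$, and the cross terms $[\Omega_z,\bar P]$ and $[\Omega_{\bar z}, P]$ cancel exactly against the derivative terms $(\bar P)_z$ and $P_{\bar z}$ by the identities above. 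What survives is the clean reduction
\[
2\Omega_{z\bar z} = [P,\bar P].
\]
Finally I would evaluate $[P,\bar P]$ in the Chevalley basis: writing $\bar P = \sum_l \bar r_{\pi(l)}e^{\alpha_l(\Omega)}R_{-\alpha_l}$ (using $\overline{R_{\alpha_j}}=R_{-\alpha_{\pi(j)}}$ and $\overline{\alpha_j(\Omega)}=\alpha_{\pi(j)}(\Omega)$ from \eqref{eq:permutation}), only the diagonal brackets $[R_{\alpha_j}, R_{-\alpha_j}] = \kappa(R_{\alpha_j},R_{-\alpha_j})\,\alpha_j^\sharp$ contribute, since the difference of two distinct roots among $\alpha_0,\ldots,\alpha_N$ is never a root (the highest root being $-\alpha_0$). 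This gives $2\Omega_{z\bar z} = \sum_{j} m_j e^{2\alpha_j(\Omega)}\alpha_j^\sharp$, once the surviving coefficient $r_j\bar r_{\pi(j)}\kappa(R_{\alpha_j},R_{-\alpha_j})$ is identified with $m_j = r_j\bar r_j$ using the reality normalisation of the root vectors, which is precisely \eqref{eq:Toda}.

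I expect the main obstacle to be the bookkeeping producing the cancellation in the Maurer--Cartan equation: getting the signs right in $\overline{\Omega_z}=-\Omega_{\bar z}$ and in $(\bar P)_z=-[\Omega_z,\bar P]$, and verifying that every term other than $-2\Omega_{z\bar z}+[P,\bar P]$ vanishes, all of which rests on $\Omega$ being Cartan-valued so that $\Ad_{\exp\Omega}$ is diagonal on root spaces. A secondary subtlety is the final coefficient matching, where the reality condition $\overline{R_{\alpha_j}}=R_{-\alpha_{\pi(j)}}$ together with the convention $m_j=r_j\bar r_j$ must be shown to be consistent (in particular that the surviving coefficient is the positive real number $m_j$); it is worth recording here the consistency check that $\overline{[P,\bar P]}=-[P,\bar P]$, so that $[P,\bar P]\in i\mathfrak t$ in agreement with $\Omega_{z\bar z}\in i\mathfrak t$.
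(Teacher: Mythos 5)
Your proposal is correct and follows essentially the same route as the paper: both treat the existence of a Toda frame as the zero-curvature (Maurer--Cartan) condition for $\varphi = (\Omega_z + \Ad_{\exp\Omega}W)\,dz + (-\Omega_{\bar z} + \Ad_{\exp-\Omega}\overline{W})\,d\bar z$, reduce it to $2\Omega_{z\bar z} = [\Ad_{\exp\Omega}W,\,\Ad_{\exp-\Omega}\overline{W}]$, and then identify this with \eqref{eq:Toda} via the vanishing of $[R_{\alpha_j},R_{-\alpha_l}]$ for $j\neq l$. The only difference is direction and explicitness -- the paper starts from \eqref{eq:Toda} and rewrites it as the bracket identity, while you start from the Maurer--Cartan equation and are more careful about the conjugation bookkeeping (e.g.\ deriving the $d\bar z$ component from reality of the frame), which the paper leaves implicit.
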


\begin{proof}
  Using $[R_{\alpha_j}, R_{- \alpha_l}] = 0$ whenever $j \neq l$, we can
  rewrite the Toda field equation \eqref {eq:Toda} as
  \begin{align*}
    2{\Omega}_{z\bar{z}} &
    =\sum_{j,l=0}^ Nr_j\overline{r_l}e^{{\alpha}_j({\Omega})}e^{{\alpha}_j({\Omega})}[R_{{\alpha}_j},R_{-{\alpha}_l}]\\
    & =[\sum_{j=0}^ N r_je^{{\alpha}_j({\Omega})}R_{{\alpha}_j},
    \sum_{l=0}^ N\overline{r_l}e^{{\alpha}_l({\Omega})}R_{-{\alpha}_l}].
  \end{align*}

  From equation  \eqref  {eq:roots}   we know $e^{\alpha_j (\Omega)} R_{\alpha_j} = \Ad_{\exp
  \Omega} R_{\alpha_j}$ and also
\[
e^{\alpha_l (\Omega)} R_{- \alpha_l} = e^{-
  \alpha_l (- \Omega)} R_{- \alpha_l} = \Ad_{\exp - \Omega} R_{- \alpha_l} .
\]
  If we set  $W : = \sum_{j = 0}^ N r_j R_{\alpha_j}$ with the normalisation is described in the lemma then since $\sum_{j = 0} ^ NR_jR_{\alpha_j} =\sum_{j = 0} ^ NR_j\overline {R_{-\alpha_{j}}} $, the Toda field
  equation becomes
  \[ 2 \Omega_{z \bar{z}} = [ \Ad_{\exp \Omega} W, \Ad_{\exp - \Omega}
     \overline{W}]. \]
  Now for any given $\Omega : \C \to i\mathfrak {t} $ the integrability condition for the existence
  of a Toda frame with respect to $ W $ is the Maurer-Cartan equation  \eqref {eq:MC}  for
\[
\varphi = (\Omega_z +
  \Ad_{\exp \Omega} W) dz + (\Omega_{\bar z} +
  \Ad_{\exp -\Omega} W) d\bar z.
\]
 Namely, this integrability condition is
  \begin{align*}
    0 & =(-{\Omega}_{\bar{z}}+{\Ad}_{\exp
    -(\Omega)}{\overline{W}})_z-({\Omega}_z+{\Ad}_{\exp
    {\Omega}}W)_{\bar{z}}\\
    &\qquad +[{\Omega}_z+{\Ad}_{\exp
    {\Omega}}W,-{\Omega}_{\bar{z}}+{\Ad}_{\exp -(\Omega)}{\overline{W}}]\\
    & =-2{\Omega}_{z\bar{z}}+[{\Ad}_{\exp {\Omega}}W,{\Ad}_{\exp
    -{\Omega}}{\overline{W}}],
  \end{align*}
which is precisely the Toda field equation.
\end{proof}

Recall that we write $
 \alpha_0 = - \sum_{j = 1}^ N m_j \alpha_j$ 
for the expression of the lowest root $\alpha_0 $ in terms of the chosen simple roots $\alpha_1,\ldots,\alpha_N $.
Given $\tilde{F} : \C \to G $ with
\begin {equation}
\label {eq:rootcoefficients}
\tilde F\inv \tilde F_z|_{\g_1 ^\sigma} = \sum_{j=0}^ N c_j R_{\alpha_j},
\end {equation}
we say that a cyclic element
\begin {equation}\label {eq:W}
 W=\sum_{j=0}^N r_j R_{\alpha_j} 
\end {equation}
   $\mathfrak{g} ^\sigma_1 $
 is \emph{normalised with respect to} $\tilde{F} : \C \to G $ if
\[
r_0 \prod_{j = 1}^ N r_j^{m_j} = c_0 \prod_{j = 1}^ N c_j^{m_j}.
\]
\begin{theorem}\label {theorem:Toda}
A map $\psi:\C\rightarrow G/T $ possesses a Toda frame if and only if it is cyclic primitive
  Let $\psi : \C \to G / T$ be a cyclic primitive map possessing a frame $\tilde{F} : \C \to G$  such that $c_0 \prod_{j =
  1}^ N c_j^{m_j}$ is constant, where $ c_j $ are the root coefficients defined in  \eqref  {eq:rootcoefficients}. 
Then for any cyclic element $W $ of $\g ^\sigma_1 $ which is normalised with respect to $\tilde F $
  there exists a Toda frame $F : \C \to G$ of $\psi$ with respect to $W$. Furthermore if $\psi$ and $\tilde F$ are doubly periodic with lattice $\Lambda$  then so is the Toda frame $ F $.

Conversely, if $\psi:\C \to G/T$ has a Toda frame $ F $ with respect to cyclic $ W\in\g_1 ^\sigma $
then $\psi$ is cyclic primitive and $ W $ is normalised with respect to $ F $. In particular then the root coefficients $c_j$ are such that $c_0 \prod_{j =
  1}^ N c_j^{m_j} $ is constant. 
  
\end{theorem}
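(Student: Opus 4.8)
The plan is to dispatch the converse by a direct eigenspace computation and then to build the forward direction by an explicit, purely algebraic gauge transformation of the given frame, with the Maurer--Cartan equations supplying the one identity that makes the construction close up.

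For the converse, suppose $F$ is a Toda frame for $W=\sum_{j=0}^N r_jR_{\alpha_j}$, so $F^{-1}F_z=\Omega_z+\Ad_{\exp\Omega}W$ with $\Omega:\C\to i\t$. By \eqref{eq:roots}, $\Ad_{\exp\Omega}W=\sum_j r_je^{\alpha_j(\Omega)}R_{\alpha_j}\in\g^\sigma_1$ while $\Omega_z\in\t^\C=\g^\sigma_0$, so $F^{-1}F_z$ lies in $\g^\sigma_0\oplus\g^\sigma_1$ and $\psi$ is primitive; since $r_j\neq0$ each coefficient $r_je^{\alpha_j(\Omega)}$ is nonzero, so $\psi$ is cyclic primitive. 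Reading off $c_j=r_je^{\alpha_j(\Omega)}$ and using $\alpha_0=-\sum_{j\ge1}m_j\alpha_j$ gives $c_0\prod_{j\ge1}c_j^{m_j}=r_0\prod_{j\ge1}r_j^{m_j}\,e^{(\alpha_0+\sum m_j\alpha_j)(\Omega)}=r_0\prod_{j\ge1}r_j^{m_j}$, which is constant and exhibits $W$ as normalised with respect to $F$.

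For the forward direction I would first extract from flatness the key relation. Writing $A$ for the $\t^\C$-component of $\tilde\varphi'=\tilde F^{-1}\tilde F_z=A+\sum_j c_jR_{\alpha_j}$, the $\g^\sigma_1$-component of the Maurer--Cartan equation \eqref{eq:MC} for $\tilde\varphi$ is diagonal in the $R_{\alpha_j}$ and yields $(c_j)_{\bar z}=-c_j\,\alpha_j(\bar A)$ for $j=0,\dots,N$. Weighting the logarithmic derivatives by $m_j$ and using $\alpha_0+\sum m_j\alpha_j=0$ shows $Q:=c_0\prod_{j\ge1}c_j^{m_j}$ is holomorphic and, one checks, independent of the lift; this explains why constancy of $Q$ is the right hypothesis, and it is automatic when $\psi,\tilde F$ descend to a torus, by Liouville. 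Now I seek the Toda frame as $F=\tilde F\exp\mu$ with $\mu:\C\to\t$, so that $F^{-1}F_z=A+\mu_z+\sum_j c_je^{-\alpha_j(\mu)}R_{\alpha_j}$. Choosing a smooth branch $L_j:=\log(c_j/r_j)$ on the simply connected plane, I define $\mu$ and $\Omega$ algebraically by $\alpha_j(\mu)=\tfrac12(L_j-\overline{L_{\pi(j)}})$ and $\alpha_j(\Omega)=\tfrac12(L_j+\overline{L_{\pi(j)}})$. Then $\alpha_j(\Omega+\mu)=L_j$ forces the root coefficients to match, $c_je^{-\alpha_j(\mu)}=r_je^{\alpha_j(\Omega)}$, by construction.

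Three verifications then carry the real content. First, using $\overline{\alpha_j}=-\alpha_{\pi(j)}$ together with the mark invariance $m_{\pi(j)}=m_j$ of the extended diagram, one checks that $\mu$ is genuinely $\t$-valued and $\Omega$ is $i\t$-valued, and that the $j=0$ equations are consistent with $j=1,\dots,N$; here the normalisation of $W$ and constancy of $Q$ are exactly what guarantee $\alpha_0(\Omega+\mu)=L_0$. Second, and this is the crux, the Cartan part of the Toda-frame equation demands $(\Omega-\mu)_z=A$; since $\alpha_j(\Omega-\mu)=\overline{L_{\pi(j)}}$, applying $\partial_z$ and invoking the relation $(c_{\pi(j)})_{\bar z}=-c_{\pi(j)}\alpha_{\pi(j)}(\bar A)$ collapses this to the identity $\alpha_j(A)=\alpha_j(A)$, so no differential equation need be solved and $F=\tilde F\exp\mu$ is automatically a lift of $\psi$ into $G$. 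Third, in the doubly periodic case the branch increments of the $L_j$ around $\Lambda$ make $\mu$ periodic modulo a lattice in $\t$, and I must show this lattice lies in $\ker(\exp\colon\t\to T)$ so that $\exp\mu$, and hence $F$, is $\Lambda$-periodic; constancy of $Q$ (automatic on the torus) controls precisely the winding that could otherwise obstruct this. This periodicity/integrality check is where I expect the main difficulty, the reality bookkeeping being routine once the conjugation conventions $\overline{R_{\alpha_j}}=R_{-\alpha_{\pi(j)}}$ are fixed.
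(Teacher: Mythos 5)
Your treatment of the converse and your construction of the Toda frame are essentially the paper's own proof. The paper also takes the ansatz $F=\tilde F\exp X$ with $X:\C\to\t$, reduces the $\g_1^\sigma$-matching to $e^{\alpha_j(X+\Omega)}=c_j/r_j$ (its equation \eqref{eq:Xplus}), uses the normalisation of $W$ to make the $j=0$ equation consistent, and then splits $X+\Omega$ into its $\t$- and $i\t$-components; your formulas $\alpha_j(\mu)=\tfrac12(L_j-\overline{L_{\pi(j)}})$ and $\alpha_j(\Omega)=\tfrac12(L_j+\overline{L_{\pi(j)}})$ are exactly that splitting written out using $\bar\alpha_j=-\alpha_{\pi(j)}$. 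For the Cartan part the paper argues via the $\g_1^\sigma$-component \eqref{eq:MCg1} of the Maurer--Cartan equation for the \emph{new} frame, obtaining $[\Ad_{\exp\Omega}W,\varphi'_{\t}-\partial\Omega]=0$ and invoking cyclicity; your direct verification of $(\Omega-\mu)_z=A$ from the relation $(c_j)_{\bar z}=-c_j\alpha_j(\bar A)$ for the \emph{old} frame is an equivalent use of the same flatness equation, and it is correct. Your observation that $Q=c_0\prod_{j\ge 1}c_j^{m_j}$ is holomorphic, hence constant on a torus by Liouville, is also correct and is a pleasant point the paper does not make.

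The genuine gap is in the double-periodicity claim, which you leave open and for which the mechanism you propose is the wrong one. For $\Gamma\in\Lambda$ write $L_j(z+\Gamma)-L_j(z)=2\pi i n_j$ with $n_j\in\Z$ constant. Constancy of $Q$ yields only the single relation $\sum_{j=0}^N m_j n_j=0$, whereas what must be shown is that the increment of $\mu$, which by your formulas satisfies
\[
\alpha_j\bigl(\mu(z+\Gamma)-\mu(z)\bigr)=\pi i\left(n_j+n_{\pi(j)}\right),
\]
lies in $\ker(\exp:\t\to T)$. That is a parity condition on the integers $n_j+n_{\pi(j)}$ (together with a condition at the centre of $G$), and the relation $\sum_{j}m_jn_j=0$ says nothing about it: it cannot, for instance, rule out $n_1+n_{\pi(1)}$ being odd. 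What the paper actually uses is the reality structure. Periodicity of the $c_j/r_j$ makes $\exp(X+\Omega)$ periodic, giving \eqref{eq:conj1}; applying the conjugation of $\g^\C$ fixing $\g$, which fixes $X$ (it is $\t$-valued) and negates $\Omega$ (it is $i\t$-valued), gives \eqref{eq:conj2}; combining the two eliminates the $\Omega$-contribution entirely and shows that $\exp X$, hence $F=\tilde F\exp X$, is $\Lambda$-periodic. (Even this, read literally, first shows only that $\exp(X(z+\Gamma)-X(z))$ squares to the identity; the essential point is that conjugation, not the constancy of $Q$, is the tool that controls the winding.) To complete your proof you should replace the appeal to $Q$ in the final step by this conjugation argument.
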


\begin{proof}
  Consider the frames $F : = \tilde{F} \exp X$ of $\psi$ where $X : \C \to
  \mf{t}$. For such $F$ we have $F^{- 1} F_z = \Ad_{\exp - X} \tilde{F} \inv
  \tilde{F}_z + X_z$ and so
  \[ F^{- 1} F_z |_{\g_1 ^\sigma} = \Ad_{\exp - X} \tilde{F} \inv
     \tilde{F}_z |_{\g_1 ^\sigma} . \]
  This implies the Toda condition of $\Ad_{\exp \Omega} W = F^{- 1} F_z
  |_{\g_1 ^\sigma}$ is equivalent to
    \begin{align}
\Ad_{\exp(X+{\Omega})}W=\tilde{F}^{-1}\tilde{F}_z|_{\g_1 ^\sigma}=\sum_{j=0}^ Nc_jR_{{\alpha}_j}.
\end{align}
  Using equation \eqref{eq:roots} we can rewrite this as
  \begin{align*}\label{eq:Omega+X}
\sum_{j=0}^ Nr_je^{{\alpha}_j(X+{\Omega})}R_{{\alpha}_j}=\sum_{j=0}^ Nc_jR_{\alpha_j}.
\end{align*}

  Comparing root space coefficients implies that
\begin {equation}
\label {eq:Xplus}
e^{\alpha_j (X + \Omega)} =
  \frac{c_j}{r_j}\text { for $j = 1, \ldots k$}
\end {equation}
 and $r_0 \prod _{j = 1}^ N (e^{\alpha_j
  (X + \Omega)})^{- m_j} = c_0$. Since $ W $ is normalised with respect to $\tilde F $
 and $\C$ is simply connected, we can solve for $X + \Omega$. We can then find $\Omega$
  and $X$ from $X + \Omega$ by taking its $\mf{t}$ and $i\mf{t}$ components
  respectively.

It remains to show that $\Omega_z dz = F \inv \partial F |_{\mf{t}} = \varphi'_{\mf{t}}$.
  From the $\g_1 ^\sigma$ component  \eqref {eq:MCg1} of the Maurer-Cartan equation for $\varphi $ we
  have
  \[ \partial ( \Ad_{\exp \Omega} W) - [ \Ad_{\exp \Omega} W,
     \varphi'_{\mf{t}}] = 0 \]
  or equivalently
  \[ [ \Ad_{\exp \Omega} W, \varphi'_{\mf{t}} - \partial \Omega] = 0. \]
  Since $W$ is cyclic so is $\Ad_{\exp \Omega} W$ and thus $\varphi'_{\mf{t}} =
  \partial \Omega$.

Conversely, given $W$  and a solution $\Omega$ to the corresponding affine Toda field equation, the resulting Toda  frame $F$ is primitive. Furthermore the equation
\[
 r_0 (e^{- \sum_{j = 1 } ^ N  m_j \alpha_j (X + \Omega)} R_{\alpha_0} + \sum_{j = 1}^ N r_j e^{\alpha_j
  (X + \Omega)} R_{\alpha_j} = \sum_{j = 0}^ N c_j R_{\alpha_j}
\]
 implies that
  $r_0 \prod_{j = 1}^ N r_j^{m_j} = c_0 \prod_{j = 1}^ N c_j^{m_j}$ and hence $c_0 \prod_{j = 1}^ N c_j^{m_j}$ is a non-zero constant. This implies that the $c_j$ are nowhere zero and $\psi$ is cyclic primitive.

%

Now suppose $\tilde{F}$ is doubly periodic with respect to a lattice $\Lambda $. Then  for $j=1,\ldots N$, from\eqref {eq:Xplus} we see that $e^{\alpha_j(X+\Omega)}$ is  doubly periodic with respect to $\Lambda$ and so
\[
\exp(X + \Omega) = \exp(\sum_{j=1}^ N \alpha_j(X+\Omega)\eta_j)
\]
 is also.
Given any $\Gamma \in \Lambda$ it follows that 
\begin{align}\label{eq:conj1}
\exp(X(z+\Gamma) -X(z)) = \exp(\Omega(z)-\Omega(z+\Gamma)).
\end{align}
 Using the conjugation map $\g^\C \to \g^\C$ which fixes $\g$, we obtain from \eqref{eq:conj1} that
\begin{align}\label{eq:conj2}
\exp(X(z+\Gamma) -X(z)) = \exp(-\Omega(z)+\Omega(z+\Gamma).
\end{align}
When combined, \eqref{eq:conj1} and \eqref{eq:conj2} imply that $\exp(X(z+\Gamma))=\exp(z))$ for all $z$ and hence $\exp X$ is doubly periodic with lattice $\Lambda$.

Since $\tilde F$ and $\exp X$ are both  doubly periodic with lattice $\Lambda$ we know $F =\tilde F \exp X$ is also.
\end{proof}

Our chief interest lies in cyclic primitive $\psi $ which are doubly periodic, as it is these we shall show are of finite type. We henceforth restrict our attention to doubly-periodic maps and denote by $ \Tor $ any genus one Riemann surface.
 Let $W$ be a cyclic element of $\g _1 ^\sigma $ as before. We say that a frame $F : \Tor \to G$ of $\psi : \Tor \rightarrow G / T$ is a Toda frame with respect to $W$ if $F$ is a Toda frame of $\psi$ when both are considered as maps from $\C$. From the proof of Theorem~\ref {theorem:Toda} we make the following observation, which will prove useful in the next section.
\begin{lemma}\label {lemma:periodic}
If $F:\Tor \to G$ is a Toda frame of $\psi: \Tor \to G/T$ then the corresponding affine Toda field $\Omega : \C \rightarrow i\mathfrak{t}$ has the property that $\exp \Omega$  and  $\Omega_z$ are doubly periodic with lattice $\Lambda$.
\end{lemma}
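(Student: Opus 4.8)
The plan is to exploit that $F$, descending to $\Tor=\C/\Lambda$, is doubly periodic with lattice $\Lambda$ as a map $\C\to G$, and then to read off the periodicity of $\Omega_z$ and $\exp\Omega$ from the two graded components of the Toda frame equation \eqref{eq:Todaframe}.

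First I would observe that since $F(z+\Gamma)=F(z)$ for every $\Gamma\in\Lambda$, the logarithmic derivative $F^{-1}F_z$ is doubly periodic. Now \eqref{eq:Todaframe} reads $F^{-1}F_z=\Omega_z+\Ad_{\exp\Omega}W$, and since $\Omega$ takes values in $\t^\C=\g_0^\sigma$ while $W$, and hence $\Ad_{\exp\Omega}W$, lies in $\g_1^\sigma$ (which, as noted after \eqref{eq:Coxeter}, is the sum of the simple and lowest root spaces and is preserved by $\Ad_{\exp\Omega}$), this is exactly the decomposition of $F^{-1}F_z$ into its $\g_0^\sigma$- and $\g_1^\sigma$-components. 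Projecting onto $\g_0^\sigma$ gives $\Omega_z=(F^{-1}F_z)_{\g_0^\sigma}$, which is therefore doubly periodic with lattice $\Lambda$; this settles the assertion for $\Omega_z$ immediately.

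For $\exp\Omega$ I would expand the $\g_1^\sigma$-component using \eqref{eq:roots}: writing $W=\sum_{j=0}^N r_j R_{\alpha_j}$ as in \eqref{eq:W} and $\Omega=\sum_{j=1}^N\alpha_j(\Omega)\eta_j$ in the dual basis $\eta_j$, one has $\Ad_{\exp\Omega}W=\sum_{j=0}^N r_j e^{\alpha_j(\Omega)}R_{\alpha_j}$. Since the root vectors $R_{\alpha_j}$ are linearly independent and the $r_j$ are nonzero by cyclicity of $W$, the double periodicity of this component forces each scalar $e^{\alpha_j(\Omega)}$ to be doubly periodic. As in the proof of Theorem~\ref{theorem:Toda}, I would then conclude from $\exp\Omega=\exp\bigl(\sum_{j=1}^N\alpha_j(\Omega)\eta_j\bigr)$ that $\exp\Omega$ is doubly periodic.

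The step needing care — and what I expect to be the main obstacle — is this last passage from periodicity of the scalars $e^{\alpha_j(\Omega)}$ to periodicity of the group element $\exp\Omega$, since a priori $C(\Gamma):=\Omega(z+\Gamma)-\Omega(z)$ satisfies only $\alpha_j(C(\Gamma))\in 2\pi i\Z$, which pins down $\exp C(\Gamma)$ in the adjoint group but need not force $\exp C(\Gamma)=\Id$ in $G$. To close this I would first check that $C(\Gamma)$ is independent of $z$: periodicity of $\Omega_z$ together with the reality relation $\Omega_{\bar z}=-\overline{\Omega_z}$, valid because $\Omega$ is $i\t$-valued, makes $d\Omega$ doubly periodic, so the difference $C(\Gamma)$ is constant. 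I would then invoke the reality of the frame — the conjugation fixing $\g$ and sending $\Omega$ to $-\Omega$, together with the normalisation $\overline{R_{\alpha_j}}=R_{-\alpha_{\pi(j)}}$ and $\overline{\alpha_j}=-\alpha_{\pi(j)}$ of \eqref{eq:permutation} — to rule out a nontrivial $\exp C(\Gamma)$, mirroring the conjugation argument \eqref{eq:conj1}--\eqref{eq:conj2} used to control $\exp X$ in Theorem~\ref{theorem:Toda}. Verifying that this interplay between the real and imaginary parts indeed yields $\exp C(\Gamma)=\Id$ is the delicate point on which the whole lemma turns.
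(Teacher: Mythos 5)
Your opening steps coincide with the paper's own (implicit) argument: the paper obtains this lemma by specialising the doubly-periodic part of the proof of Theorem~\ref{theorem:Toda} to the case $\tilde F=F$, $X=0$, where $\Omega_z$ is the $\t^\C$-component of $F^{-1}F_z$ (the theorem's proof shows $\varphi'_{\mf t}=\partial\Omega$) and the scalars $e^{\alpha_j(\Omega)}=c_j/r_j$ inherit double periodicity from $F$ via \eqref{eq:Xplus}. Your decomposition of $F^{-1}F_z$ into its $\g_0^\sigma$- and $\g_1^\sigma$-parts is the same computation, and your extra observation that $\Omega_{\bar z}=-\overline{\Omega_z}$ makes $d\Omega$, hence the shift $C(\Gamma)=\Omega(z+\Gamma)-\Omega(z)$, constant is correct.

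However, your proof is incomplete exactly where you say it is, and the repair you sketch cannot close the gap as stated. All that reality yields is: $C(\Gamma)\in i\t$ is constant, $\alpha_j(C(\Gamma))=2\pi i n_j$ with $n_j\in\Z$, and, applying the conjugation via $\bar\alpha_j=-\alpha_{\pi(j)}$, the relation $n_{\pi(j)}=-n_j$. When $\pi$ fixes every node (maximally compact $\t$, in particular $G$ compact) this forces all $n_j=0$, so $\Omega$ itself is periodic and you are done; but in the noncompact cases that are the point of this paper, $\pi\neq\mathrm{id}$ and these constraints only pin down $\exp C(\Gamma)$ up to the centre of $G^\C$. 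Concretely, for $\g=\mathfrak{sl}(2,\R)$ with split Cartan $\t=\R H$, $H=\diag(1,-1)$, the permutation swaps $\alpha_0=-\alpha_1$ with $\alpha_1$, and $C(\Gamma)=\pi i nH$ with $n$ odd satisfies every condition you derive ($C(\Gamma)\in i\t$ constant, $\alpha_1(C(\Gamma))\in 2\pi i\Z$, $n_0=-n_1$), yet $\exp C(\Gamma)=-I\neq\Id$ in $SL(2,\C)$; moreover $-I$ also satisfies $\overline{\exp C(\Gamma)}=(\exp C(\Gamma))^{-1}$, so no identity of the type \eqref{eq:conj1}--\eqref{eq:conj2} can exclude it. (In the paper those two equations in any case serve a different purpose: splitting the already-assumed periodicity of $\exp(X+\Omega)$ between $\exp X$ and $\exp\Omega$; even there, combining them only shows that $\exp\bigl(X(z+\Gamma)-X(z)\bigr)$ squares to the identity.) To be fair, the paper itself dispatches this very step with the unjustified phrase that $e^{\alpha_j(X+\Omega)}$ doubly periodic implies ``$\exp(X+\Omega)$ \ldots is also'', so you have put your finger on a genuine soft spot in the text; but flagging the delicate point is not the same as resolving it. Eliminating the central ambiguity requires a further, global argument beyond reality, or one must settle for the adjoint-level statement --- periodicity of $\Ad_{\exp\Omega}$ and of $\Omega_z$, which your argument does establish and which is all that the finite-type proof in Section~\ref{finite} actually uses.
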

%
%

\section{Finite type result}\label{finite}


We will now show that all smooth maps $ \psi $ from a 2-torus $\Tor$ into the $ k $-symmetric space $G/T$ which have a Toda frame are of finite type. Hence all such maps can be constructed from a pair of commuting ordinary differential equations on a finite-dimensional loop algebra. In \cite {BFPP:93} it was  shown that all semisimple adapted harmonic maps of a 2-torus into a compact semisimple Lie group are of finite type. We prove our finite type result by adapting the methods of that paper. Note that the existence of a Toda frame forces $\psi $ to be cyclic primitive.


A map $Y : \Tor  \to \mf{g}^\C$ is called a \emph{Jacobi field} if there exists $\dot\Omega: \Tor \to \mf{t}^\C$ such that
\begin{align}\label{eq:Jacobi}
dY + [F\inv dF, Y] =
\left(\dot{\Omega}_z + [\dot\Omega, F\inv F_z ]\right)dz
+\left(-\dot\Omega_{\bz} - [\dot\Omega,F\inv F_{\bz}]\right)d\bz.
\end{align}
If $F_t$ is a family of Toda frames with corresponding $\Omega_t:\C \to i\mf{t}$ then $\frac{d}{dt} F_t |_{t=0}$ is a Jacobi field with $\dot{\Omega} = \frac{d}{dt}\Omega_t|_{t=0}$. Note that if $\dot{\Omega}=0$ the Jacobi equation is the Killing field equation.

Let $F$ be a Toda frame for  $\psi:\C\rightarrow G/T $. We have
\begin{align*}
F\inv dF =
 (\Omega_z +\Ad_{\exp \Omega} W )dz + (-\Omega_{\bz} + \Ad_{\exp -\Omega}\overline{W})d\bz
\end{align*}
for some $\Omega: T ^ 2\to i\mf{t}$ and cyclic $ W\in\g_1 ^\sigma $. Let $Y$ be a Jacobi field with corresponding $\dot{\Omega}: T ^ 2\to i\mf{t}$.
Then $ Y $ must satisfy
\begin{align}
Y_z + [\Omega_z +\Ad_{\exp \Omega} W, Y]&= \dot{\Omega}_z + [\dot{\Omega},\Ad_{\exp \Omega} W]\label {eq:Jacobi1}\\
Y_{\bz} + [-\Omega_{\bz} + \Ad_{\exp -\Omega}\overline{W}, Y]&= -\dot{\Omega}_{\bz} - [\dot{\Omega},\Ad_{\exp -\Omega}\overline{W}].\label {eq:Jacobi2}
\end{align}
Taking   \eqref{eq:Jacobi1}$_{\bz} - $ \eqref{eq:Jacobi2}$_z$ we obtain
\begin{equation*}
2\dot{\Omega}_{z\bar{z}}=-\bigl[\Ad_{\exp \Omega} W,[\dot{\Omega},\Ad_{\exp -\Omega}\overline{W}]\bigr] -\bigl[\Ad_{\exp -\Omega}\overline{W},[\dot{\Omega},\Ad_{\exp \Omega} W]\bigr].
\end{equation*}

Since $\Omega$ and $W$ are fixed, we see that $\dot{\Omega}$ satisfies a linear elliptic partial differential equation. As the torus is compact, the space of possible $\dot{\Omega}$ is finite dimensional.

\begin{lemma}\label{lem:Jacobi}
Suppose $\psi: \Tor\rightarrow G/T$ is a cyclic primitive map possessing 
  a formal Killing field $Y=\sum_{j\leq 1} \lambda^j Y_j\in \Omega^{\sigma}\mf{g} ^\C$.  Then $\psi$ has a (real) polynomial Killing field with highest term $Y_1$.
\end{lemma}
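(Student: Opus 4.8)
The plan is to convert the recursion hidden in the Lax equation \eqref{eq:lax} into a finite one, using compactness of $\Tor$ in an essential way. First I would substitute $Y=\sum_{j\le 1}\lambda^j Y_j$ and $\varphi_\lambda=(\lambda\Ad_{\exp\Omega}W+\Omega_z)\,dz+(\lambda^{-1}\Ad_{\exp-\Omega}\overline W-\Omega_{\bz})\,d\bz$ into $dY=[Y,\varphi_\lambda]$ and collect powers of $\lambda$. Matching the $\lambda^{2}\,dz$ coefficient forces $[\Ad_{\exp\Omega}W,Y_1]=0$, so the top term centralises the cyclic element $\Ad_{\exp\Omega}W$, while the remaining coefficients give a first-order recursion propagating each $Y_{j-1}$ downward from $Y_j$ and its derivatives (with the residual freedom lying in $\ker\ad_{\Ad_{\exp\Omega}W}$). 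Truncating $Y$ at depth $n$ then produces a $\lambda$-polynomial $Y^{[n]}$ whose Lax defect $dY^{[n]}-[Y^{[n]},\varphi_\lambda]$ is supported only in the two lowest powers of $\lambda$ and is expressed entirely through the single omitted coefficient; thus truncation fails to produce a genuine Killing field by a single controlled bottom term.

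The heart of the argument, and the step I expect to be the main obstacle, is to make this truncation close up, and here compactness is indispensable. I would invoke the finite dimensionality of the space of Jacobi data $\dot\Omega$ established immediately above: the defect of each truncation is, modulo the recursion, of the shape of the inhomogeneous term in the Jacobi equation, so it is governed by an element of that fixed finite-dimensional space, and consequently all the $Y_j$ are confined to a single finite-dimensional space $V$ of solutions of one linear elliptic system on the compact torus $\Tor$. The downward recursion then acts as a fixed linear endomorphism of $V$, so by the Cayley--Hamilton theorem the sequence $(Y_j)_{j\le 1}$ satisfies a finite linear recurrence with constant coefficients. Interpreting this recurrence back in $\Omega^\sigma\g^\C$ shows that a suitable polynomial in $\lambda^{-k}$ annihilates $Y$ modulo $\lambda$-polynomials, and extracting the top piece yields an honest polynomial Killing field whose highest coefficient is still $Y_1$. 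It is precisely here that non-compactness of $\Sigma$ would defeat the argument, in agreement with finite type being special to the torus; establishing the exact identification of the defect with a Jacobi field, so that the finite-dimensionality genuinely applies, is the delicate point.

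Finally I would arrange reality. The conjugation of the twisted loop algebra defined by the condition $\xi_{-j}=\overline{\xi_j}$ carries formal Killing fields to formal Killing fields, since $\varphi_\lambda$ is real, and preserves the space of polynomial Killing fields; moreover the finite-dimensional space $V$ and the recurrence above are defined over $\R$. Working throughout with this real structure, I can extract a \emph{real} polynomial Killing field from the construction. Because $Y_1\in\g^\sigma_1$ occupies the top degree and reality couples it only to its conjugate in the matching bottom degree, the extraction can be normalised so that the highest term of the resulting real polynomial Killing field is exactly $Y_1$, which completes the proof. Note that no appeal to the adapted condition \eqref{eq:adapted} is needed at this stage, since the lemma asserts only the existence of a polynomial, not adapted, Killing field.
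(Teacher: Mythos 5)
Your opening and closing moves do match the paper's proof in outline: truncations of $Y$ are Jacobi fields whose defect is controlled by a $\t^\C$-valued datum, the space of such data $\dot\Omega$ is finite dimensional by ellipticity on the compact torus, and reality is arranged at the end by adding the conjugate loop. But the step you yourself identify as the heart of the argument has a genuine gap, in two places. First, the inference ``the defects are governed by elements of the finite-dimensional space of Jacobi data, consequently all the $Y_j$ are confined to a single finite-dimensional space $V$'' is a non sequitur: finite dimensionality holds for the $\t^\C$-valued solutions $\dot\Omega$ of the linearised Toda equation, not for the loop coefficients $Y_j$; indeed the coefficients with $j\not\equiv 0 \bmod k$ are not $\t^\C$-valued at all, so they are not Jacobi data of anything. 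Second, the ``downward recursion'' is not a fixed linear endomorphism of any finite-dimensional space: the Lax coefficient equations couple $\partial Y_j$ to $Y_{j-1}$ and $\bar{\partial} Y_j$ to $Y_{j+1}$, and recovering $Y_{j-1}$ from $Y_j$ requires inverting $\ad_{\Ad_{\exp\Omega}W}$, which is possible only modulo its kernel; the passage $Y_{-kl}\mapsto Y_{-k(l+1)}$ depends on the whole tail of $Y$, not on $Y_{-kl}$ alone. Hence Cayley--Hamilton has no operator to act on, and the recurrence valid at \emph{every} degree that you need (so that a polynomial in $\lambda^{-k}$ annihilates $Y$ modulo $\lambda$-polynomials) is not available.

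The repair is simpler than what you attempted, and it is what the paper does. Truncate only at depths that are multiples of $k$, and halve the bottom coefficient: set $Y^l := \tfrac12 Y_{-kl} + \sum_{-kl<j\le 1}\lambda^{j+kl}Y_j$. By the twisting, $Y_{-kl}\in\g_0^\sigma=\t^\C$, and a direct computation shows $Y^l$ is a Jacobi field with datum $\dot\Omega^l=\tfrac12 Y_{-kl}$; this is exactly where the halving and the restriction to depths $\equiv 0\bmod k$ are essential, and it is also why your claim that the defect of an arbitrary truncation involves only the single omitted coefficient is inaccurate --- it involves the kept bottom coefficient as well, and has Jacobi shape only in this special case. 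Since the infinitely many $\dot\Omega^l$ lie in a finite-dimensional space, some nontrivial finite linear combination $\sum_l c_l\dot\Omega^l$ vanishes; the same combination $\sum_l c_l Y^l$ is then a Jacobi field with $\dot\Omega=0$, i.e.\ a genuine polynomial Killing field, whose top coefficient is a nonzero multiple of $Y_1$, which you rescale. No all-degrees recurrence is needed because one truncates first and combines afterwards. Your reality step then goes through essentially as you wrote it, and as in the paper: after multiplying by a suitable power of $\lambda^k$ so that conjugation cannot interfere with the top term, $\xi+\overline{\xi}$ is a real polynomial Killing field with highest term $Y_1$.
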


\begin{proof} 
We will find an infinite number of linearly independent Jacobi fields for which some linear combination must be a formal Killing field.
Since $Y$ is a formal Killing field, we have \eqref{eq:lax}.
\[
\sum_{j\leq 1} \lambda^j dY_j
= \left[\sum_{j\leq 1} \lambda^j Y_j,\varphi_\lambda \right].
\]
Comparing coefficients of $\lambda^j$ gives the equations
\begin{align*}
 (Y_{j})_zdz + [\varphi'_{\mf{t}} , Y_j] + [\varphi'_{\mf{p}}, Y_{j-1}] &= 0, \\
 (Y_{j})_{\bz}d\bar{z} + [\varphi''_{\mf{t}}, Y_j] + [\varphi''_{\mf{p}}, Y_{j+1}] &= 0.
\end{align*}
For each $l\in \Z^+$ set
\[
Y^{l} := \frac12 Y_{-kl} + \sum_{-kl <j \leq 1} \lambda^{j+kl} Y_j.
\]
We will show that the $Y^l$ are all Jacobi fields.  Considering the coefficients separately gives
\begin{align*}
(Y^l)_zdz + [ \lambda \varphi'_{\mf{p}} + \varphi'_{\mf{t}} , Y^l]
&= \frac12 (Y_{-kl})_{z}dz + \left[\frac12 Y_{-kl}, \lambda \varphi'_{\mf{p}}\right]\\
(Y^l)_\bz d\bar{z}+ \left[\varphi''_{\mf{t}} + \lambda^{-1} \varphi''_{\mf{p}},  Y^l\right]
&= -\frac12 (Y_{-kl})_{\bz}d\bar{z} - \left[\frac12  Y_{-kl}, \lambda^{-1} \varphi''_{\mf{p}}\right].
\end{align*}
Since $Y_{-kl} \in \mf{g}_0 = \mf{t}^\C$ we can set $\dot\Omega^l := \frac12 Y_{-kl}$. With this choice of $\dot{\Omega}$, $Y$ is a solution to \eqref{eq:Jacobi} and hence is a Jacobi field. The space of potential $\dot\Omega$ is finite dimensional, so there must be a non-trivial finite linear combination of the $\dot\Omega^l$ which equals $0$. The corresponding finite linear combination  of the $Y^l$ is a formal Killing field. Since the highest order terms of the $Y^{l}$ are each $Y_1$ we can rescale this formal Killing field to one with highest order term $Y_1$.
After multiplying by an appropriate power of $\lambda^k$ we may also assume that the degree of the lowest term has smaller absolute value than the degree of the highest term. 
 Then  $ \overline{\xi} +\xi $
is a polynomial Killing field for $\xi$ and by construction has highest order term $ Y_1 $.
\end{proof}

\begin{theorem} 
\label{thm:finite}
Suppose $\psi: \Tor\rightarrow G/T$ has a Toda frame $ F:\Tor \to G$. Then $\psi $ is of finite type.
\end{theorem}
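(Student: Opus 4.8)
The plan is to manufacture an \emph{adapted} polynomial Killing field for $\psi$, since by definition this establishes finite type. As $\psi$ possesses a Toda frame it is cyclic primitive (Theorem~\ref{theorem:Toda}), and I may write
\[
F\inv dF = (\Omega_z + \Ad_{\exp\Omega}W)\,dz + (-\Omega_{\bz} + \Ad_{\exp -\Omega}\overline{W})\,d\bz
\]
for the affine Toda field $\Omega:\C\to i\t$ and the fixed cyclic element $W\in\g^\sigma_1$. By Lemma~\ref{lemma:periodic} both $\exp\Omega$ and $\Omega_z$ descend to $\Tor$, so $\varphi_\lambda$ as in \eqref{eq:primitiveflat} is defined on the torus; its $dz$-part is $\lambda\,\Ad_{\exp\Omega}W + \Omega_z$, whose top coefficient $a := \Ad_{\exp\Omega}W$ is cyclic because $\exp\Omega\in T$ preserves the grading and scales root vectors (cf.\ \eqref{eq:roots}). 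The first step is to construct a formal Killing field $Y = \sum_{j\le 1}\lambda^j Y_j$ with prescribed highest term $Y_1 = a$.

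To build $Y$ I would solve the Lax equation \eqref{eq:lax} recursively, comparing coefficients of $\lambda^j$ as in the splitting underlying \eqref{eq:MCg1}. The $\lambda^j$-component of the $dz$-part reads
\[
(Y_j)_z = [Y_{j-1},a] + [Y_j,\Omega_z],
\]
so $\ad_a$ carries the solution from degree $j$ down to degree $j-1$. Since $a$ is cyclic it is a regular element of $\g^\C$, whence $\g^\C = \ker\ad_a\oplus\im\ad_a$; the right-hand side lies in $\im\ad_a$ by virtue of the Maurer--Cartan equation for $\varphi_\lambda$, so $Y_{j-1}$ can be solved for at every order (uniquely modulo $\ker\ad_a$), the $\bz$-equation being consistent with this by flatness, and I may descend indefinitely to obtain the whole series. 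The top equation ($j=1$) is $a_z = [Y_0,a] + [a,\Omega_z]$; using $a_z = [\Omega_z,a]$ (as $W$ is constant) this reduces to $[Y_0 - 2\Omega_z,\,a] = 0$, and since the exponents of $\g^\C$ are nonzero we have $\ker\ad_a\cap\g^\sigma_0 = 0$, forcing $Y_0 = 2\Omega_z$. This last identity is exactly what will deliver the adapted normalisation.

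With $Y$ in hand I invoke Lemma~\ref{lem:Jacobi}: since $\psi$ is a cyclic primitive map of the compact torus, $Y$ yields a genuine real polynomial Killing field $\xi$ of some degree $d\equiv 1\pmod k$ whose highest term is $Y_1 = a$ and whose next coefficient $\xi_{d-1}$ is $Y_0 = 2\Omega_z$ (the rescaling and $\lambda^k$-shift in that lemma disturb neither of the two leading terms). Here the compactness of $\Tor$ is used essentially, through the finite dimensionality of the space of Jacobi fields. It remains to check adaptedness: since $K=T$ the map $r$ of \eqref{eq:r} is multiplication by $\tfrac12$, so
\[
\bigl(\lambda\xi_d + r(\xi_{d-1})\bigr)\,dz = (\lambda\,a + \Omega_z)\,dz,
\]
which is exactly the $dz$-part of $\varphi_\lambda$; reality of $\xi$ together with $\overline{\Omega} = -\Omega$ gives the matching $d\bz$-part, so \eqref{eq:adapted} holds and $\xi$ is an adapted polynomial Killing field. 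Hence $\psi$ is of finite type.

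The main obstacle is the formal Killing field construction of the second paragraph, namely the solvability of the downward $\ad_a$-recursion at every order. Everything rests on the regularity of the cyclic top term $a$ and on the structure of $\ker\ad_a$ in the principal (Coxeter) grading; it is precisely here that the cyclic hypothesis does its essential work, both in permitting the recursion to close and in pinning down $Y_0 = 2\Omega_z$ so that the resulting Killing field is adapted rather than merely polynomial.
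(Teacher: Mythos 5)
Your overall architecture is the same as the paper's: produce a formal Killing field whose top term is $\Ad_{\exp \Omega}W$, feed it into Lemma~\ref{lem:Jacobi} to obtain a real polynomial Killing field, and then verify adaptedness. However, the step you yourself flag as ``the main obstacle'' --- the construction of the formal Killing field --- is precisely where there is a genuine gap, and the justification you offer does not close it. In your downward recursion the order-$j$ equation determines $[Y_{j-1},a]$ as $(Y_j)_z - [Y_j,\Omega_z]$, so you must know that this expression lies in $\im\ad_a$. That is \emph{not} a consequence of the Maurer--Cartan equation for $\varphi_\lambda$, nor of regularity of $a$: since $a=\Ad_{\exp\Omega}W$ varies over the torus, the bundles $\ker\ad_a$ and $\im\ad_a$ vary too, and the $\ker\ad_a$-component of $(Y_j)_z-[Y_j,\Omega_z]$ involves exactly the $\ker\ad_a$-part of $Y_j$ that your recursion leaves undetermined (``unique modulo $\ker\ad_a$''). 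Making the recursion consistent therefore forces you to choose those kernel parts by solving first-order differential equations at every order, and to do so with doubly periodic solutions on $\Tor$; neither solvability nor periodicity is addressed, and this is the real analytic content of the theorem. The paper avoids integration altogether by a different device (the one going back to \cite{BFPP:93}): embed $\g^\C\subset\mathfrak{gl}(m,\C)$, set $D=d-\ad_{\Omega_z dz-\Omega_{\bar z}d\bar z}$ so that $D(\Ad_{\exp\Omega}W)=0$ and hence $D$ preserves $V=\ker\ad_{\Ad_{\exp\Omega}W}$ and $V^{\perp}=\im\ad_{\Ad_{\exp\Omega}W}$, and seek $Y=(1+X)^{-1}\Ad_{\exp\Omega}W(1+X)$ with $X=\sum_{k\le -1}\lambda^k X_k$, the $X_k$ sections of $V^{\perp}$. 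The resulting recursion \eqref{eq:Vcomp} for the $X_k$ is solvable \emph{purely algebraically}: its right-hand sides lie in $V^{\perp}$ because $D$ preserves $V^{\perp}$ and because of the multiplicative relations $VV^{\perp}\subset V^{\perp}$, $V^{\perp}V\subset V^{\perp}$ (this is why the matrix embedding is needed); the $d\bar z$-half of the Killing equation is then recovered from flatness via the vanishing of the top coefficient of $B$, and two projections (onto $\g^\C$, then onto the twisted loop algebra) put the result in $\Omega^{\sigma}(\g^\C)$. Your proposal replaces this entire mechanism with an assertion.

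A second, more minor point: your claim that Lemma~\ref{lem:Jacobi} ``disturbs neither of the two leading terms'' is not quite right, because the realification $\xi+\overline{\xi}$ at the end of that lemma's proof can contribute $\overline{\xi_{1-d}}\in\t^\C$ to the coefficient in degree $d-1$. Fortunately this does not matter, and the paper's treatment shows why: adaptedness of the subleading coefficient is automatic, since the $\lambda^{d-1}$ coefficient of the Lax equation gives $[\xi_{d-1}-2\Omega_z,\Ad_{\exp\Omega}W]=0$, and $\xi_{d-1}-2\Omega_z\in\t$ must then vanish because the cyclic element $\Ad_{\exp\Omega}W$ centralises nothing in $\t$. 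So there is no need to track $Y_0$ through the lemma at all --- but there is a need for an honest construction of the formal Killing field, and that is what is missing.
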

\begin{proof}
 Let $F:\Tor \to G$ be the Toda frame of $\psi$ with corresponding $\Omega: \Tor \to i\mf{t}$ and $ W\in\g_1 ^\sigma $.
 Recall that $\psi$ is of finite type if it has an adapted polynomial Killing field $\xi$, that is a  $\xi= \sum_{j=-d}^d \lambda^j \xi_j $ in the real twisted loop algebra $\Omega^\sigma \g$ satisfying the Killing field equation \eqref{eq:lax} and such that
\[
\xi_d + \lambda\frac 12 \Ad_{\exp \Omega}W.
\]
Since $ G $ was assumed simple, the complexified Lie algebra $\g ^\C $ is simple and hence has a faithful linear representation so can be regarded as a subalgebra of some $\mathfrak{gl} (m,\C) $. If we set
\[
D = d - \ad_{\Omega_z dz- \Omega_{\bar{z}}d\bar{z}}
\]
then we can rewrite \eqref{eq:lax} as
\[
D\xi_\lambda = [\xi_\lambda, (2\Omega_z + \lambda \Ad_{\exp \Omega} W)dz + (-2\Omega_{\bar{z}} + \lambda^{-1}\Ad_{\exp -\Omega} \overline{W})d\bar{z}].
\]
From $d(\Ad_{\exp \Omega} W) = [\Omega_z, \Ad_{\exp \Omega} W]dz +[\Ad_{\exp \Omega} W, \Omega_{\bar{z}}]d\bar{z} $ we know $D\Ad_{\exp \Omega} W =0$.

Writing $V=\ker \ad_{\Ad_{\exp \Omega}W}$ and $V^{\perp} = \im \ad_{\Ad_{\exp \Omega}W}$, we have a bundle decomposition $\Tor \times \g ^\C = V \oplus V^{\perp}$. Furthermore
\[
VV \subset V, \quad V^{\perp}V \subset V^{\perp}, \quad V V^{\perp} \subset V^\perp.
\]

Let $X=\sum_{k\leq -1} \lambda^{k}X_k$ where the $X_k$ are sections of $V^\perp$. We seek $X$ such that
\begin {equation}\label{eq:Y}
Y=(1+X)^{-1}\Ad_{\exp \Omega}W(1+X)
\end {equation}
is a solution of the Killing field equation. Note that
\begin{align*}
DY=(1+X)^{-1}[\Ad_{\exp \Omega}W, DX(1+X)^{-1}](1+X),
\end{align*}
and define a one-form $\kappa$ by
$$\kappa =(1+X)((2\Omega_z + \lambda \Ad_{\exp \Omega}W)dz + (-2\Omega_{\bar{z}} + \lambda^{-1}\Ad_{\exp -\Omega}\overline{W})d\bar{z}-DX)(1+X)^ {- 1}.$$
Routine calculations show that
\begin{align*}
DY + [(2\Omega_z + \lambda \Ad_{\exp \Omega}W)dz &+ (-2\Omega_{\bar{z}} + \lambda^{-1}\Ad_{\exp -\Omega}\overline{W})d\bar{z}, Y]\\
& = (1+X)\inv [\Ad_{\exp _\Omega} W, -\kappa](1+ X)
\end{align*}
and hence $Y$ satisfies the Killing field equation if and only if  $\kappa$ takes values in $V$.

Our task then is to construct $X$ so that $\kappa$  takes values in $V$. We have
\[
\kappa'\cdot  (1+X) = (1+X)(2\Omega_z + \lambda \Ad_{\exp \Omega}W) d z-\partial X
\]
where $\kappa'\cdot (1+ X) $ denotes multiplication. 
Note that $\Omega_z $ is valued in $ V^\perp$ as it lies in $\t^\C$.

The splitting of $\kappa' \cdot (1+X)$ into its $V$ and $V^\perp$ components is
\begin{align*}
(V): & \quad \kappa' = \left (\lambda \Ad_{\exp \Omega}W + (2X\Omega_z)^V\right)dz\\
(V^\perp):& \quad \kappa' \cdot X =\left (2\Omega_z +(2X\Omega_z)^\perp + \lambda X\Ad_{\exp \Omega}W\right)dz -D'X.
\end{align*}
Substitution implies
\begin{align}\label{eq:Vcomp}
\lambda[\Ad_{\exp \Omega}W,X] dz = 2 \Bigl(\Omega_z   +(X\Omega_z)^\perp   -(X\Omega_z)^V X\Bigr) dz -D' X.
\end{align}
Conversely if \eqref{eq:Vcomp} holds then $\kappa' =\left ( \lambda \Ad_{\exp \Omega}W + (2X\Omega_z)^V\right) dz $ and so $\kappa' $ takes values in $V$.
Comparing the $\lambda^j$ coefficients on both sides of \eqref{eq:Vcomp} we can solve for $X$ inductively over $j$ by at each stage requiring $X_j \in  \im \ad_{\Ad_{\exp \Omega}W}$ and
\begin{align*}
[\Ad_{\exp \Omega}W,X_1] &= 2\Omega_z\\
[\Ad_{\exp \Omega}W, X_{j-1}] dz &= 2 \Bigl ((X_j \Omega_z)^\perp    -\sum_{s+l=j}(X_s\Omega_z)^V X_l\Bigr) dz -D X_k.
\end{align*}
Define $\D = d + \ad_{\varphi_\lambda} $ and note that \eqref{eq:flat} says precisely that $\D$ is a flat connection in the trivial bundle $\Tor\times\g ^\C $.
With $ X $ as above we have $\D' Y=0$. We wish to show that $\D'' Y=0$ also, as this will imply that $Y$ satisfies the Killing field equation \eqref {eq:lax}.

Define $B$ by
\begin{equation}\label{eq:B}
\D'' Y = (1+X)^{-1}B (1+X).
\end{equation}
Using $\Ad_{\exp \Omega}W = (1+X) Y(1+X)^{-1}$  and
\[
\D'' \Ad_{\exp \Omega}W = [-\Omega_{\bar{z}} + \lambda\inv \Ad_{\exp -\Omega}\overline{W},  \Ad_{\exp \Omega}W]d\bar z
\]
we obtain
\begin{align*}
B d\bar z &=[(-\Omega_{\bar{z}} + \lambda^{-1}\Ad_{\exp -\Omega}\overline{W}) d\bar z - \D'' X (1+X)^{-1},\Ad_{\exp \Omega}W ]
\end{align*}
which shows that $B$ takes values in $V^\perp$.

As $\D$ is a flat connection we have commutativity of covariant derivatives and hence $\D'\D''Y=0$ which we write as
\begin{align}\label{eq:DZB1}
-\D' X (1+X)^{-1}B  + \D' B  + B \D' X (1+X)^{-1} = 0.
\end{align}
Since $\D' B = D' B  + [2\Omega_z +\lambda \Ad_{\exp \Omega}W,B] dz $, we can rewrite \eqref{eq:DZB1} as
\begin{align}\label{eq:DZB2}
D' B&=[\D' X (1+X)^{-1} + (\lambda \Ad_{\exp \Omega}W-2\Omega_z ) dz, B].
\end{align}
From its defining equation
\eqref {eq:B} we know that $ B $ is of the form $\sum_{j\leq d}\lambda^{j} B_j$. We will show that $ B =0$. Suppose not, then there is some non-zero top coefficient $B_d$. Since $X$ has only negative powers of $\lambda$, the $\lambda^{d+1}$ term in \eqref{eq:DZB2} is
\[
[\Ad_{\exp \Omega}W, B_d].
\]
 However we know that $B_d \in V^\perp$ and hence it must be zero.
Thus $\D'' Y = 0$ and $Y$ satisfies the Killing field equation. From \eqref{eq:Y} we see that $Y$ is of the form $\sum_{j\leq 0}\lambda^{j}Y_j$ and
furthermore $Y_0=\Ad_{\exp \Omega}W$.

We now need to project this $Y$ onto $\Omega^\sigma(\g^\C)$ to get a solution to the Killing field equation in the correct loop algebra.

 Representations of simple Lie algebras are completely reducible and we have identified $\g^\C$ with a subalgebra of $\mf{gl}(m,\C)$ so it must have a complementary subspace in $\mf{gl}(m,\C)$ which is invariant under the adjoint action of $\g^\C$. This means there exists a projection map $\pi: \Omega (\mf{gl}(m,\C)) \to \Omega(\g ^\C)$ such that
\[
d \pi(Y) = \pi(d Y) = \pi([Y, \varphi_\lambda]) = [\pi(Y), \varphi_\lambda].\]
Thus we have that $\pi(Y)\in \Omega(\g ^\C)$ satisfies the Killing field equation. Furthermore  $Y_0 = \pi(\Ad_{\exp \Omega}W) =\Ad_{\exp \Omega}W$. Set $\tilde{Y} = \lambda Y = \sum_{j \leq 1}\lambda^j Y_{j-1}$ and note that $\tilde{Y}_1 = Y_0 = \Ad_{\exp \Omega}W$.

We want to project $\tilde Y$ onto $\Omega^\sigma(\g^\C)$. Consider the map
$$\pi^{\sigma}_j := \frac 1 {k} (\Id + \epsilon^{-j} \sigma^j +  \epsilon^{-2j} \sigma^{2j} + \ldots +\epsilon^{-(k-1)j}\sigma^{(k-1)j})$$ where $\epsilon$ is the $k$-th primitive root of unity.  This map $\pi_j^\sigma$ projects any element in $\g^\C$ to its part in $\g_j$.
Thus we can define $\pi^\sigma : \Omega(\g ^\C)  \to \Omega^\sigma (\g^\C)$ by
\[
\pi^\sigma (\sum_j \lambda^j \xi_j) = \sum_j \lambda^j \pi_j^\sigma (\xi_j).
\]
Then $\tilde{\xi} = \pi^\sigma (\tilde{Y})$ satisfies
\begin{align*}
d\tilde{\xi}=[\tilde{\xi},\Omega_z + \lambda \Ad_{\exp \Omega} W) dz +  ( - \Omega_{\bz} +\lambda\inv\Ad_{\exp -\Omega}\overline{W}) d\bar{z}]
\end{align*}
and $\tilde{\xi}_1=\tilde{Y}_1=\Ad_{\exp \Omega} W$.

Now we may apply Lemma~\ref{lem:Jacobi} to $\tilde{\xi}$ to conclude the existence of a (real) polynomial Killing field $\xi$ whose top term, $\xi_d$, is $\Ad_{\exp \Omega} W$.

The $d-1$ coefficient of $\xi_z =[\xi,  \Omega_z + \lambda\Ad_{\exp \Omega}W]$
 is
$$(\Ad_{\exp \Omega}W)_z= \left[\Ad_{\exp \Omega}W,  \Omega_z\right] + [\xi_{d-1}, \Ad_{\exp \Omega}W] $$
which implies
$$\left[\xi_{d-1}-2\Omega_z,\Ad_{\exp \Omega}W\right]= 0.$$ Since $W$ is a cyclic element and  $\xi_{d-1}-2\Omega_z\in \t$ we conclude $\xi_{d-1}-2\Omega_z=0$ and hence $\xi$ satisfies the theorem.
\end{proof}
%

\bibliographystyle{plain}
\def\cprime{$'$}


\end {document}